\def \C {{\mathbb C}}
\def \N {{\mathbb N}}
\def \R {{\mathbb R}}
\def \Z {{\mathbb Z}}
\def \d {\,{\rm d}}
\def\re{{\Re e\,}}
\def\dm{\frac{1}{2}}
\def\sumb{\mathop{{\sum}^{\flat}}}
\def\sumstar{\mathop{{\sum}^{*}}}
\def\le{\leqslant}
\def\leq{\leqslant}
\def\ge{\geqslant}
\def\geq{\geqslant}
\def \ee{{\mathrm{e}}}
\newcommand*{\ic}{\mathrm{i}}
\newcommand*{\f}{\mathfrak{f}}
\newcommand*{\g}{\mathfrak{g}}
\newcommand*{\fh}{\mathfrak{h}}
\newcommand{\vep}{\varepsilon}
\newcommand*{\cush}{\mathfrak{S}_{\ell+1/2}}%
\newcommand*{\cC}{\mathcal{C}}
\theoremstyle{plain}
\newtheorem{theorem}{Theorem}
\newtheorem{lemma}{Lemma}[section]
\newtheorem{proposition}{Proposition}
\theoremstyle{remark}
\newtheorem{remark}{Remark}
\theoremstyle{definition}
\numberwithin{equation}{section}
\newcommand*{\pk}{\mathscr{H}}
\newcommand{\bpm}{\begin{pmatrix}}
\newcommand{\epm}{\end{pmatrix}}
\newcommand{\bsm}{\lt(\begin{smallmatrix}}
\newcommand{\esm}{\end{smallmatrix}\rt)}
\newcommand{\lt}{\left}
\newcommand{\rt}{\right}
\begin{document}

\vskip 5mm

\title[Sign changes of Fourier coefficients of modular forms]
{Sign changes of Fourier coefficients of modular forms of half integral weight, 2}
\author{Y.-J. Jiang, Y.-K. Lau,  G.-S. L\"u, E. Royer \& J. Wu}

\address{
Yujiao Jiang
\\
Department of Mathematics
\\
Shandong University
\\
Jinan, Shandong 250100
\\
China}
\email{yujiaoj@hotmail.com}

\address{
Yuk-Kam Lau
\\
Department of Mathematics
\\
The University of Hong Kong
\\
Pokfulam Road
\\
Hong Kong}
\email{yklau@maths.hku.hk}

\address{
Guangshi L\"u
\\
Department of Mathematics
\\
Shandong University
\\
Jinan, Shandong 250100
\\
China}
\email{gslv@sdu.edu.cn}

\address{%
Emmanuel Royer\\
Clermont Universit\'e\\
Universit\'e Blaise Pascal\\
Laboratoire de math\'ematiques\\
BP 10448\\
F-63000 Clermont-Ferrand\\
France %
}
\curraddr{%
Emmanuel Royer\\
Universit\'e Blaise Pascal\\
Laboratoire de math\'ematiques\\
Les C\'ezeaux\\
BP 80026\\
F-63171 Aubi\`ere Cedex\\
France %
}
\email{{emmanuel.royer@math.univ-bpclermont.fr}}

\address{%
Jie Wu\\
CNRS\\
Institut \'Elie Cartan de Lorraine\\
UMR 7502\\
F-54506 Van\-d\oe uvre-l\`es-Nancy\\
France}
\curraddr{%
Université de Lorraine\\
Institut \'Elie Cartan de Lorraine\\
UMR 7502\\
F-54506 Van\-d\oe uvre-l\`es-Nancy\\
France
}
\email{jie.wu@univ-lorraine.fr}

\date{\today}

\begin{abstract}
In this paper, we investigate the sign changes of Fourier coefficients of half-integral weight Hecke eigenforms
and give two quantitative results on the number of sign changes.
\end{abstract}

\subjclass[2000]{11F30}
\keywords{Fourier coefficients, half-integral weight modular forms, sign-changes, truncated Voronoi series}
\maketitle

\addtocounter{footnote}{1}

\section{Introduction}

The study of sign-changes of Fourier coefficients of automorphic forms is recently very active. For modular (Hecke eigen-)forms of integral weight, the consequential result from Matom\"aki and Radziwill \cite{MR}  is exceptionally charming, where the multiplicative properties of the Fourier coefficients play a substantial role. However the modular forms of half-integral weight do not share the same kind of multiplicativity,  and many problems deserve delving. 

Let \(\ell\geq 2\)  be a positive integer, and denote by \(\cush\) the set of all cusp forms of weight \(\ell+1/2\) for the congruence subgroup \(\Gamma_0(4)\).  Consider the coefficients in the Fourier expansion of a complete Hecke eigenform \(\f\in\cush\) at \(\infty\),
\begin{equation}\label{eq_newone}%
\f(z)=\sum_{n\ge 1} \lambda_{\f}(n) n^{\ell/2-1/4}\ee(nz) \quad (z\in\pk),
\end{equation}
where \(\ee(z) = \ee^{2\pi\ic z}\) and \(\pk\) is the Poincar\'e upper half plane. A specific question is the number of sign-changes  when all $\lambda_\f(n)$ are real. We interlude with the meaning of sign-changes of a sequence. 

Let $\mathcal{N}$ be a subset of $\N$ endowed with the ordering of integers. The sets of squarefree integers or arithmetic progressions  are basic examples. Given a real sequence  $\{a_n\}_{n\in \mathcal{N}}$. A sign-change is realized via a closed and bounded interval $[i,j]\subset (0,\infty)$ such that 
\begin{itemize}
\item[(i)]
its end-points $i,j$ lie in $\mathcal{N}$ and satisfy $a_ia_j<0$, and 
\item[(ii)]
$a_n=0$ for all $n\in (i,j)\cap \mathcal{N}$. 
\end{itemize}
The sequence $\{a_n\}_{n\in \mathcal{N}}$ is said to have a sign-change in the interval $I$  if $I$ contains one such interval $[i,j]$.  Besides,  the number of sign-changes of $\{a_n\}_{n\in \mathcal{N}}$ in $[1,x]$, denoted by $\cC^\mathcal{N}(x)$, is meant to be the number of intervals $[i,j]$ contained in $[1,x]$.\footnote{An equivalent but slightly  different formulation is given in \cite{LauRoyerWu2014}.}

Let $\flat$ be the set of squarefree numbers. Hulse, Kiral, Kuan \& Lim \cite{HKKL2012} proved 
that the sequence $\{\lambda_{\f}(t)\}_{t\in\flat}$ has an infinity of sign-changes.  
A quantitative version is given in Lau, Royer \& Wu \cite[Theorem 4]{LauRoyerWu2014}, 
which says $\cC_\f^\flat(x) \gg  x^{(1-4\varrho)/5-\varepsilon}$ 
where  $\cC_\f^\flat(x)$ denotes the number of sign-changes of $\{\lambda_{\f}(t)\}_{n\in \flat}$  in $[1,x]$
and the constant $\varrho$ is determined by \eqref{varrho} below. Conjecturally $\varrho=\varepsilon$ but it is still hard to guess the tight lower bound. 
On the other hand, Meher \& Murty \cite{MeherMurty2014} studied the sign-change problem 
for Hecke eigenforms $\f$ in Kohnen plus subspace of \(\cush\). 
A form $\f$ in the plus space has its Fourier coefficients  supported at integers $n\equiv 0$ or $(-1)^\ell\,(\bmod\,{4})$, i.e.  
$\f$ has the Fourier expansion at $\infty$ of the form
$$
\mathfrak{f}(z)
= \sum_{(-1)^{\ell} n \equiv 0, 1 ({\rm mod}\,4)} \lambda_{\f}(n) n^{\ell/2-1/4} {\rm e}^{2\pi {\rm i}n z}.
$$
When $\f$ is a Hecke eigenform  in the plus space and its coefficients $\lambda_f(n)$ are all real, 
Meher \& Murty proved in \cite[Theorem 2]{MeherMurty2014} that $\{\lambda_f(n)\}_{n\in \N}$ has a sign-change 
in the short interval $(x, x+x^{43/70+\varepsilon}]$ for any $\varepsilon>0$ and for all sufficiently large $x\ge x_0(\vep)$. 
An immediate consequence is $\cC_\f^\N (x) \gg x^{27/70-\vep}$. This work naturally motivates the sign-change problem for arithmetic progressions. 

In this paper, we furnish progress, based on our work in \cite{JLLRW}, in the above problems for complete Hecke eigenforms $\f \in\cush$. Firstly for the case $\mathcal{N}=\flat$, we sharpen the lower bound for $\cC_\f^\flat(x)$. 
\begin{theorem}\label{thm1}
Let  \(\ell\geq 2\)  be an integer and  \(\f\in\cush\) a complete Hecke eigenform such that its Fourier coefficients are real. Let $\varrho$ be defined as in \eqref{varrho} below, and  $\vartheta$ any number satisfying
$$
0<\vartheta < \min(\tfrac{1-2\varrho}3, \tfrac14).
$$
Then
\begin{equation}\label{LB:thm1}
\cC_{\f}^\flat (x)\gg_{\f, \vartheta} x^{\vartheta}
\end{equation}
for all \(x\geq x_0(\f,\vartheta)\), where the constant \(x_0(\f, \vartheta)\)  and the implied constant depend on \(\f\) and \(\vartheta\) only.
\end{theorem}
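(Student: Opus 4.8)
The plan is to bound $\cC_{\f}^\flat(x)$ from below by weighing the total $L^1$-mass of the sequence on $[1,x]$ against the largest size reached by its partial sums, the latter being exactly what the truncated Voronoi series controls. Write $\Phi(t):=\sum_{n\le t,\,n\in\flat}\lambda_{\f}(n)$ for the squarefree-restricted summatory function, and let $I_1=(u_1,v_1],\dots,I_{R+1}=(u_{R+1},v_{R+1}]$ be the maximal subintervals of $[1,x]$ on which $\lambda_{\f}$ keeps one sign, so that $R=\cC_{\f}^\flat(x)$. On each run the terms do not cancel, whence $\sum_{n\in I_j}|\lambda_{\f}(n)|=|\Phi(v_j)-\Phi(u_j)|\le 2\max_{t\le x}|\Phi(t)|$. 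Summing over $j$ gives the elementary but decisive inequality
\begin{equation*}
\cC_{\f}^\flat(x)+1\ \ge\ \frac{1}{2\max_{t\le x}|\Phi(t)|}\sum_{\substack{n\le x\\ n\in\flat}}|\lambda_{\f}(n)|,
\end{equation*}
so it suffices to bound the numerator below and the denominator above.

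For the numerator I would use the second and fourth moments over squarefree $n$. Rankin--Selberg/Waldspurger theory supplies $\sum_{n\le x,\,n\in\flat}\lambda_{\f}(n)^2\gg_{\f}x$, while the second moment of the central values attached to the quadratic twists gives $\sum_{n\le x,\,n\in\flat}\lambda_{\f}(n)^4\ll_{\f,\varepsilon}x^{1+\varepsilon}$. Hölder's inequality in the form $\sum a_n^2\le(\sum|a_n|)^{2/3}(\sum a_n^4)^{1/3}$ then converts these into the lower bound $\sum_{n\le x,\,n\in\flat}|\lambda_{\f}(n)|\gg_{\f,\varepsilon}x^{1-\varepsilon}$.

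For the denominator I would invoke the truncated Voronoi series of \cite{JLLRW}. After removing square divisors by Möbius inversion and passing through the Shimura correspondence, $\Phi(t)$ is written as an oscillating main term plus an error term whose size is governed by the exponent $\varrho$ of \eqref{varrho}; optimising the length of the expansion against the error within each dyadic block should produce a uniform estimate of the shape $\max_{t\le x}|\Phi(t)|\ll_{\f,\varepsilon}x^{\max(3/4,\,(2+2\varrho)/3)+\varepsilon}$, the floor $x^{3/4}$ reflecting the cost of the square-divisor unfolding (the short inner sums over large square divisors, which admit no Voronoi cancellation) and the term $x^{(2+2\varrho)/3}$ its dependence on \eqref{varrho}. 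Feeding this and the $L^1$ lower bound into the displayed inequality yields $\cC_{\f}^\flat(x)\gg_{\f,\varepsilon}x^{1-\max(3/4,(2+2\varrho)/3)-\varepsilon}=x^{\min(1/4,\,(1-2\varrho)/3)-\varepsilon}$. Since $\vartheta$ is strictly below $\min(\tfrac{1-2\varrho}{3},\tfrac14)$, choosing $\varepsilon$ small enough makes the exponent exceed $\vartheta$, which gives \eqref{LB:thm1} for $x\ge x_0(\f,\vartheta)$.

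The main obstacle is the denominator estimate: making the truncated Voronoi series uniform in $t\le x$ for the \emph{squarefree-restricted} sum, and in particular controlling the inflation of the exponent introduced by the Möbius/Shimura unfolding, is the delicate analytic core, and is precisely where the input \eqref{varrho} of \cite{JLLRW} enters. A secondary technical point is that the fourth-moment bound, hence the $L^1$ lower bound, must remain valid after restriction to squarefree $n$; this is standard but should be verified. By contrast, the runs inequality and the Hölder step are routine.
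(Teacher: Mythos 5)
Your global strategy---runs inequality, $L^1$ lower bound, uniform bound on the partial sums---is genuinely different from the paper's, and its combinatorial skeleton (the inequality $\cC_\f^\flat(x)+1\ge \sum^{\flat}_{n\le x}|\lambda_\f(n)|\big/\big(2\max_{t\le x}|\Phi(t)|\big)$ together with the H\"older step) is sound. But both analytic inputs are asserted rather than proved, and the central one rests on a tool that does not exist here. There is no truncated Voronoi formula for the squarefree-restricted summatory function $\Phi(t)$: the Voronoi formula of Theorem~\ref{Voronoi} applies to sums of $\lambda_\f(n)$ over \emph{all} $n$ (twisted by additive characters), and the squarefree restriction cannot be reached from it by M\"obius inversion, because $\lambda_\f$ is not multiplicative---the passage from $\lambda_\f(t)$ to $\lambda_\f(tm^2)$ goes through the Shimura lift, and the resulting series $L_\f^\flat(s)$ has no functional equation and is only known to continue holomorphically to $\re s>\tfrac12$ with the convexity bound of Lemma~\ref{main}. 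Your exponent $\max(3/4,(2+2\varrho)/3)$ is reverse-engineered from the desired conclusion; the heuristic offered for it (``the cost of the square-divisor unfolding'') does not correspond to any computation. What Lemma~\ref{main} actually yields via Perron is a bound of the shape $\Phi(t)\ll_{\f,\vep} t^{(2+\varrho)/3+\vep}$, which is \emph{stronger} than what you posit; combined with your claimed $L^1$ lower bound it would beat the theorem, and that should itself make you suspicious of the other input.

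That other input is the fourth-moment bound $\sum^{\flat}_{t\le x}\lambda_\f(t)^4\ll x^{1+\vep}$. This is not ``standard'': it requires a Waldspurger-type formula expressing $\lambda_\f(t)^2$ through central values $L(\tfrac12,F\otimes\chi)$ of quadratic twists of the Shimura lift, uniformly over all squarefree $t$ (not only fundamental discriminants) for a form in $\cush$ that need not lie in the Kohnen plus space, followed by Heath-Brown's quadratic large sieve. None of this machinery appears in the paper or in \cite{JLLRW}, and the uniformity issues are precisely where such arguments break. The paper sidesteps both difficulties by localizing: it shows that every interval $[x,x+x^\eta]$ with $\eta=\max\{\tfrac23(1+\varrho),\tfrac34\}+\vep$ contains a sign change, by playing the short-interval second moment $\sum^{\flat}_{x\le t\le x+h}\lambda_\f(t)^2\gg h$ (Lemma~\ref{lem1} combined with Lemma~\ref{lem3.1}) against the weighted first-moment bound \eqref{mean} of size $h^{1/2}x^\vep$ (a consequence of Lemma~\ref{main}) and the pointwise bound \eqref{varrho}; the count $\gg x^{1-\eta}$ then gives the stated exponent together with a localization statement your global argument cannot produce. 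To salvage your route you would need either to prove the fourth-moment bound in this generality, or to fall back on $\sum^{\flat}_{t\le x}|\lambda_\f(t)|\gg x^{1-\varrho}$ (from the second moment and \eqref{varrho}), which loses a factor $x^{\varrho}$ and no longer reaches $\min(\tfrac{1-2\varrho}3,\tfrac14)$.
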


\begin{remark}
In particular, Conrey \& Iwaniec \cite{ConreyIwaniec2000} gives $\varrho=\frac16+\vep$ which leads to
\[
\cC_{\f}^\flat (x)\gg_{\f, \varepsilon} x^{2/9-\varepsilon}
\]
for all \(x\geq x_0(\f, \varepsilon)\), improving the exponent $\frac1{15}-\varepsilon$ in \cite{LauRoyerWu2014}.
\end{remark}

Secondly we generalize the case of $\mathcal{N}=\N$ in Meher \& Murty  \cite{MeherMurty2014} to arithmetic progressions. Let $Q\ge 1$ be an integer, and $a= 0$ or $a\in \N$ with $(a,Q)=1$. Define
\begin{equation}\label{Adef}
\mathcal{A} = \mathcal{A}_{a,Q}:= \{n\in \N:\ n\equiv a\,(\bmod\,{Q})\}.
\end{equation}
We  study the sign-changes of $\{\lambda_\f(n)\}_{n\in \mathcal{A}}$ and sharpen the exponent $\tfrac{43}{70}+\vep$ of Meher \& Murty's result to $\tfrac{1}{2}$, which in turn gives the better lower bound $\cC_\f^\N (x) \gg x^{1/2}$.  
\begin{theorem}\label{thm2}
Assume  the same conditions for \(\f\) and $\varrho$ in Theorem~\ref{thm1}. Let $Q\ge 1$ be odd and $\mathcal{A}=\mathcal{A}_{a,Q}$ defined as in \eqref{Adef}. Suppose one of the following condition holds:
\begin{itemize}
\item[$1^\circ$]
$Q=1$;
\item[$2^\circ$]
$a=0$ and $Q=\prod_{p\mid Q} p^{\alpha_p}$ where all $\alpha_p$ are odd;
\item[$3^\circ$]
$(a,Q)=1$ and $Q=\prod_{p\mid Q} p^{\alpha_p}$ where all $\alpha_p$ are $\ge 2$.
\end{itemize}
Then there are positive constants $c_0=c_0(\mathfrak{f},Q)$ and $x_0= x_0(\f,Q)$ such that 
the sequence $\{\lambda_{\f}(n)\}_{n\in \mathcal{A}}$ has at least one sign change in the interval $(x, x+c_0 x^{1/2}]$ for all $x\ge x_0$. In particular, we have
$$
\cC_{\f}^{\mathcal{A}}(x)\gg_{\mathfrak{f},Q} x^{1/2}
$$
for all $x\ge x_0$.
\end{theorem}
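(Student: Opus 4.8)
The plan is to argue by contradiction, comparing a first and a second moment over the short interval, with the individual bound attached to $\varrho$ serving as the bridge between them; the length $c_0x^{1/2}$ will emerge as the finest scale at which a short-interval mean square lower bound is available. Fix $\f$ and $Q$, set $H:=c_0x^{1/2}$, and suppose that $\{\lambda_\f(n)\}_{n\in\mathcal{A}}$ has no sign change in $(x,x+H]$, so that all the nonzero terms $\lambda_\f(n)$ with $n\in\mathcal{A}\cap(x,x+H]$ share a common sign. Then
$$
\Bigl|\sum_{\substack{x<n\leq x+H\\ n\in\mathcal{A}}}\lambda_\f(n)\Bigr|
=\sum_{\substack{x<n\leq x+H\\ n\in\mathcal{A}}}|\lambda_\f(n)|,
$$
and the whole argument reduces to showing that the left-hand side is too small to sustain the right-hand side.

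For the lower bound on the right, I would first record the short-interval mean square estimate
$$
\sum_{\substack{x<n\leq x+H\\ n\in\mathcal{A}}}\lambda_\f(n)^2\gg_{\f,Q}H
\qquad(H\geq c_0x^{1/2},\ x\geq x_0),
$$
which I expect to take from the truncated Voronoi series developed in \cite{JLLRW}. Over the full range this is a Rankin--Selberg asymptotic $\sum_{n\leq x,\,n\in\mathcal{A}}\lambda_\f(n)^2\sim c_{\f,Q}\,x$; the three cases $1^\circ$--$3^\circ$ on the pair $(a,Q)$ (together with $Q$ odd, for compatibility with the mod $4$ support of $\f$) are precisely the arithmetic conditions guaranteeing $c_{\f,Q}>0$. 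Indeed, the coefficients of a half-integral weight form are governed by their squarefree part through the Shimura correspondence, so some residue classes are degenerate; writing $n=tm^2$ with $t$ squarefree, $1^\circ$--$3^\circ$ are exactly what force the squarefree parts $t$ of the elements of $\mathcal{A}$ to form a set on which $\lambda_\f(t)$ is non-negligible. The delicate point is to descend from the long range to the scale $H\asymp x^{1/2}$: the long-range asymptotic alone is useless here, since its natural error term already swamps the main term $c_{\f,Q}H$ once $H=x^{1/2}$, so a genuine short-interval mean square is needed.

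Granting this, the individual bound attached to $\varrho$, namely $|\lambda_\f(n)|\ll_{\f,\vep}n^{\varrho+\vep}$, gives
$$
\sum_{\substack{x<n\leq x+H\\ n\in\mathcal{A}}}|\lambda_\f(n)|
\geq\frac{\displaystyle\sum_{\substack{x<n\leq x+H\\ n\in\mathcal{A}}}\lambda_\f(n)^2}{\displaystyle\max_{x<n\leq x+H}|\lambda_\f(n)|}
\gg_{\f,Q}H\,x^{-\varrho-\vep}.
$$
On the other hand, since $\f$ is cuspidal the associated Dirichlet series has no pole, so there is no main term, and the truncated Voronoi series restricted to $\mathcal{A}$ yields a purely oscillatory first-moment bound
$$
\Bigl|\sum_{\substack{x<n\leq x+H\\ n\in\mathcal{A}}}\lambda_\f(n)\Bigr|\ll_{\f,Q}x^{\beta+\vep}
$$
with an exponent $\beta$ satisfying $\beta+\varrho<\tfrac12$ (the trivial estimate is useless here, as it only gives $x^{1/2+\varrho}$; genuine cancellation from Voronoi is essential, and with the Conrey--Iwaniec value $\varrho=\tfrac16$ the margin is ample). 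Comparing the two displays forces $H\,x^{-\varrho-\vep}\ll x^{\beta+\vep}$, i.e.\ $H\ll x^{\beta+\varrho+2\vep}$, contradicting $H=c_0x^{1/2}$ once $c_0$ is fixed large enough and $x\geq x_0(\f,Q)$. Hence a sign change must occur in $(x,x+c_0x^{1/2}]$.

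The main obstacle is unquestionably the short-interval mean square lower bound at the critical scale $x^{1/2}$: this is what pins the final exponent and what the truncated Voronoi machinery of \cite{JLLRW} is tailored to deliver, the oscillatory terms being controllable precisely down to $H\asymp x^{1/2}$ and no further. Once the single-interval statement is in hand, the count $\cC_\f^{\mathcal{A}}(x)\gg_{\f,Q}x^{1/2}$ follows by partitioning $[x_0,x]$ into the consecutive disjoint intervals $(y_k,y_{k+1}]$ with $y_{k+1}=y_k+c_0y_k^{1/2}$, of which there are $\gg x^{1/2}$ up to $x$, and observing that each contains at least one sign change, these being distinct because the intervals are disjoint.
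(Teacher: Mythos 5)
Your proposal follows the Meher--Murty first-moment-versus-second-moment template, but both of its key inputs are unavailable, and the second is quantitatively false with what is known. First, the short-interval mean square lower bound $\sum_{x<n\le x+H,\,n\in\mathcal{A}}\lambda_{\f}(n)^2\gg H$ at the scale $H\asymp x^{1/2}$ is asserted, not proved: the best mean square result in the paper (Lemma~\ref{lem1}) has error term $O(x^{3/4+\vep})$, so it is only nontrivial for $H\gg x^{3/4+\vep}$, and nothing in \cite{JLLRW} or in the truncated Voronoi machinery delivers an asymptotic for $\sum\lambda_{\f}(n)^2$ in intervals of length $x^{1/2}$; this is a hard problem in its own right, not a routine deliverable. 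Second, your claimed first-moment bound $x^{\beta+\vep}$ with $\beta+\varrho<\tfrac12$ does not hold for the known value of $\varrho$: Theorem~\ref{Voronoi} gives $\mathcal{S}_{\f}^{\mathcal{A}}(x)\ll x^{(1+\varrho)/3+\vep}$, so $\beta=(1+\varrho)/3=\tfrac{7}{18}$ for $\varrho=\tfrac16$, and $\tfrac{7}{18}+\tfrac16=\tfrac59>\tfrac12$. The ``ample margin'' claim is therefore wrong, and even granting the (unproved) short-interval second moment, your comparison would only reach interval length $x^{5/9+\vep}$, not $x^{1/2}$. This is precisely why the moment-comparison method tops out at exponents like $43/70$ and why the paper abandons it.

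The paper's actual route is the Heath-Brown--Tsang resonance method, which needs neither ingredient (the paper explicitly remarks that the mean square of $\lambda_{\f}(n)$ is not used). One forms $F_{\f}(t+\alpha u)=\pi\sqrt{Q}\,S_{\f}^{\mathcal{A}}((Q(t+\alpha u))^2)/\sqrt{t+\alpha u}$, expands it by the truncated Voronoi formula of Section~\ref{vor}, and integrates against the nonnegative kernels $(1-|u|)(1\pm\cos(2\pi\alpha\sqrt{n_0}u))$ to isolate the single frequency $\sqrt{n_0}/Q$; at the points $t_m=(m+\tfrac18)n_0^{-1/2}$ the two resulting integrals $J_{\pm}(t_m)$ have opposite signs, forcing the summatory function to change sign in an interval of length $\ll\sqrt{x}$. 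In this scheme the hypotheses $1^\circ$--$3^\circ$ play a completely different role from the one you assign them: they are not about positivity of a Rankin--Selberg density on $\mathcal{A}$, but about exhibiting a resonant index $n_0=2^jf_0$ ($f_0$ squarefree) with $\lambda_{\fh}(n_0)\phi_a(n_0,Q)\neq 0$, which requires the non-vanishing analysis of the Kloosterman--Sali\'e sums in Lemma~\ref{lem8.1} together with the multiplicativity relation \eqref{mul}. As written, your argument has no path to the exponent $\tfrac12$.
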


\vskip 8mm

\section{Methodologies}

Let $\lambda_\f(n)$  be the coefficients as in \eqref{eq_newone} and $\mathcal{N}$  a subset of $\N$. Define
\begin{equation}\label{def_FirstMoment}
S_{\f}^\mathcal{N}(x) 
:= \sum_{\substack{n\le x\\ n\in \mathcal{N}}} \lambda_{\f}(n).
\end{equation}
A typical approach for the sign-change detection exploits the oscillation exhibited in  the mean $S_{\f}^\mathcal{N}(x)$, while to locate the sign-change, the mean over short intervals, i.e. $S_{\f}^\mathcal{N}(x+h)- S_{\f}^\mathcal{N}(x)$ for small $h$, will be a good device. Suppose a sign-change is found in the interval $[x,x+h]$ for every $x$ large enough. Then it follows immediately that  the number of sign-changes in $[1,x]$ is at least $x/h + O(1)$ (and hence $\gg x/h$). 
A standard way to study $S_{\f}^\mathcal{N}(x)$ is via the Dirichlet series. But for various $\mathcal{N}$, we get different degree of its analytic information. 

For $\mathcal{N}=\flat$, i.e. the case of squarefree integers, we  only get an analytic continuation of the Dirichlet series
\begin{equation}\label{def:Lfbs}
L_\f^\flat(s) := \sideset{}{^\flat}\sum_{t\ge 1} \lambda_{\f}(t) t^{-s}
\end{equation} 
in the half-plane $\re s>\tfrac{1}{2}$, where $\sum_{t\ge 1}^\flat$ ranges over squarefree integers $t\ge 1$. 
As illustrated in \cite{LauRoyerWu2014}, it turns out that  the weighted mean is more effective.  Thus, to prove Theorem~\ref{thm1}, we first derive \eqref{mean} below, 
\begin{align}\label{mean}
\sideset{}{^\flat}\sum_{x\leq t\leq x+h} \lambda_{\f}(t)
\min\left\{\log\bigg(\frac{x+h}{t}\bigg), \, \log\bigg(\frac{x}{t}\bigg)\right\}
\ll_{\varepsilon} h^\frac{1}{2} x^{\varepsilon}.
\end{align}
The better exponent $\tfrac{1}{2}$ (versus $\tfrac{3}{4}$ in \cite{LauRoyerWu2014}) of $h$ is a key for the improvement.  Another key is to have a mean square formula with better $O$-term. In \cite{LauRoyerWu2014}, we showed that 
$$
\sum_{X<n\le 2X} |\lambda_{\f}(n)|^2 
= D_{\f}X+O_{\f,\vep}\big(X^{\beta+\vep}\big).
$$
with $\beta=\tfrac{3}{4}+\varrho$. Here we sharpen it to $\beta=\tfrac{3}{4}$ in Lemma~\ref{lem1} 
and then conclude Theorem~\ref{thm1} with argument in \cite{LauRoyerWu2014}. 
This will be done in Section~\ref{Sthm1}.

Next for  $\mathcal{N}=\mathcal{A}$ (see \eqref{Adef}), 
we shall provide a truncated Voronoi formula for $S_\f^\mathcal{A}(x)$ in Section~\ref{vor}. 
This result is itself interesting since the Voronoi formula is an vital tool for many applications, 
see \cite{ivic}, \cite{jutila} for example. 
Then we complete the proof of Theorem~\ref{thm2} with the method of Heath-Brown and Tsang \cite{HBT94}. 
However the congruence condition underlying $\mathcal{A}$ gives rise to new (but interesting) difficulties. To transform the congruence, additive characters of modulus $d|Q$ will be invoked and then two consequences follow:  the  summands in the Voronoi formula are intertwined with Kloosterman-Sali\'e sums, and the frequencies in the cosines are of the form $\sqrt{n}/d$. We need to select a suitable frequency for amplification with a pair of non-vanishing Sali\'e sum and Fourier coefficient in the associated summand. The implementation is successful when $Q$ fulfills the conditions in Theorem~\ref{thm2}, which will be elucidated in Sections~\ref{pthm2} \& \ref{Sthm2}.
It is worthwhile to remark that the mean square result of $\lambda_\f(n)$ is not needed for the method in \cite{HBT94}.

\vskip 8mm

\section{Background}

A cusp form $\f\in \cush$ has Fourier expansions at the three inequivalent cusps $\infty, -\tfrac{1}{2}, 0$ of 
$\Gamma_0(4)$, 
which are respectively given by \eqref{eq_newone}, and \eqref{gfe}, \eqref{hfe} below:
\begin{equation}\label{gfe}
\begin{aligned}
\g(z) 
& := 2^{\ell+1/2} (-8z+1)^{-(\ell+1/2)} \f\bigg(\frac{4z}{-8z+1}\bigg)
\\
& \,= 2^{\ell+1/2} \sum_{n\ge 1} \lambda_\g(n) n^{\ell/2-1/4} \ee(nz)
\end{aligned}
\end{equation}
and 
\begin{equation}\label{hfe}
\fh(z)
:=  (-\ic 2z)^{-(\ell+1/2)} \f\bigg(\frac{-1}{4z}\bigg) =  \sum_{n\ge 1} \lambda_\fh(n) n^{\ell/2-1/4} \ee(nz).
\end{equation}
Following the argument in \cite[Section 2.2]{LauRoyerWu2014}, we have
\begin{equation}\label{msq}
\sum_{n\le x} |\lambda_f(n)|^2 \sim x
\qquad
(\text{for all three cases $f=\f,\g,\fh$}).
\end{equation}

When ${\f}$ is a complete Hecke eigenform, we know from \cite{JLLRW} that $\g$ 
and $\fh$ are Hecke eigenforms of $\mathsf{T}(p^2)$ for all odd prime $p$.  
A consequence is, cf. \cite[Lemma 3.2 with $\mathcal{Q}=\{2\}$]{JLLRW}: for all odd $m\ge 1$, all squarefree $t$ and $j\ge 0$, 
\begin{equation}\label{mul}
\lambda_f(2^jt)=0 \ \Rightarrow \ \lambda_f(2^jtm^2)=0\quad \mbox{($f=\f,\g,\fh$)}.
\end{equation}
In addition, we have the following pointwise estimate, see  \cite[Lemma 3.3]{JLLRW}.

\begin{lemma}\label{lem3.1} 
Let $\f$ be a complete Hecke eigenform, $\g$ and $\fh$ be defined as above. 
For any integer $m=tr^2$ where $t\ge 1$ is squarefree, we have
\begin{eqnarray*}
\lambda_f(m)\ll_\f |\lambda_f(t)|\tau(r)^2 +|\lambda_\f(t)| \tau(r)^2\ll_{\f,\varrho} t^\varrho \tau(r)^2
\end{eqnarray*}
for $f=\f,\g,\fh$ respectively, where $\tau(n)$ is the divisor function and $\varrho$ satisfies \eqref{varrho} below. 
The first implied $\ll$-constant depends only $\f$ and the second implied $\ll$-constant depends 
at most on $\f$ and $\varrho$.
\end{lemma}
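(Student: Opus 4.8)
The plan is to separate the estimate into two essentially independent inputs: a \emph{multiplicative} step, controlling $\lambda_f(tr^2)$ in terms of $\lambda_f(t)$ at the odd primes and in terms of $\lambda_{\f}(t)$ at the ramified prime $2$, and an \emph{arithmetic} step, bounding the squarefree coefficients by $t^\varrho$. The multiplicative step produces the middle quantity $|\lambda_f(t)|\tau(r)^2+|\lambda_{\f}(t)|\tau(r)^2$, and the arithmetic step converts it into the final power $t^\varrho\tau(r)^2$.

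For the multiplicative step I would write $r=2^ar'$ with $r'$ odd and exploit the coprimality of $2^{2a}$ and $(r')^2$ to treat the two parts separately. At the odd primes, since $\g$ and $\fh$ are (like $\f$) eigenforms of $\mathsf{T}(p^2)$ for every odd $p$, the Shimura--Kohnen relation expresses $\lambda_f(t(r')^2)$ as $\lambda_f(t)\sum_{d\mid r'}\mu(d)\chi_t(d)A(r'/d)$, where $A(\cdot)$ denotes the normalised Fourier coefficients of the weight-$2\ell$ Shimura lift $F$ of $\f$ and $\chi_t$ is the quadratic character attached to $t$; the Deligne bound $|A(n)|\le\tau(n)$ then gives $|\lambda_f(t(r')^2)|\le|\lambda_f(t)|\sum_{d\mid r'}|\mu(d)|\tau(r'/d)\le|\lambda_f(t)|\tau(r')^2$. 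At the ramified prime $2$ the forms $\g,\fh$ carry no $\mathsf{T}(4)$-eigenform structure, so the $2$-adic propagation must be fed back into the original form $\f$; the quantitative analogue of the argument behind \cite[Lemma 3.2 with $\mathcal{Q}=\{2\}$]{JLLRW}, which already yields the vanishing implication \eqref{mul}, produces $|\lambda_f(t2^{2a})|\ll_\f (|\lambda_f(t)|+|\lambda_{\f}(t)|)\tau(2^a)^2$. Multiplying the two parts and using $\tau(r)=\tau(2^a)\tau(r')$ gives the middle bound of the statement.

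For the arithmetic step I would invoke Waldspurger's theorem, in the guise of the Kohnen--Zagier formula: up to an $\f$-dependent constant and a factor $t^{o(1)}$, the squarefree coefficient $|\lambda_f(t)|^2$ is a central value $L(\tfrac12,F\otimes\chi_t)$ of a quadratic twist of the Shimura lift. The exponent $\varrho$ in \eqref{varrho} is defined precisely so that the available subconvexity bound for these central values (for instance Conrey--Iwaniec \cite{ConreyIwaniec2000}, which gives $\varrho=\tfrac16+\varepsilon$) yields $|\lambda_f(t)|\ll_{\f,\varrho}t^\varrho$ for squarefree $t$, uniformly for $f=\f,\g,\fh$. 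Inserting this into the middle bound gives $\lambda_f(m)\ll_{\f,\varrho}t^\varrho\tau(r)^2$, as claimed.

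The main obstacle is the ramified prime $2$. Away from $2$ everything is dictated by the clean $\mathsf{T}(p^2)$ recursion and the Deligne bound, so the odd part is just bookkeeping with divisor sums. At $2$, however, $\g$ and $\fh$ are the Fourier expansions at the cusps $-\tfrac12$ and $0$ rather than $\infty$ and possess no direct Hecke-eigenform structure there; controlling their $2$-adic growth is exactly what forces the second term $|\lambda_{\f}(t)|\tau(r)^2$, and upgrading the qualitative vanishing statement \eqref{mul} to a quantitative magnitude bound is the delicate step, resting on the structural results of \cite{JLLRW}.
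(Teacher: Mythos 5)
You should first note that this paper does not actually prove Lemma~\ref{lem3.1}: it imports it verbatim from \cite[Lemma 3.3]{JLLRW}, so there is no in-paper argument to compare against. That said, your strategy is the natural one and almost certainly tracks the cited proof in outline: multiplicativity at odd primes via the $\mathsf{T}(p^2)$-eigenform property of $\f$, $\g$, $\fh$ together with the Shimura-lift recursion and Deligne's bound $|A(n)|\le\tau(n)$, a separate treatment of the prime $2$ that feeds the $2$-power part back into $\lambda_{\f}$ (which is exactly why the second term $|\lambda_{\f}(t)|\tau(r)^2$ appears), and then the squarefree bound to pass to $t^{\varrho}\tau(r)^2$.

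Two of your steps, however, are asserted rather than argued, and they are precisely the load-bearing ones. First, the quantitative $2$-adic bound $|\lambda_f(t2^{2a})|\ll_{\f}(|\lambda_f(t)|+|\lambda_{\f}(t)|)\tau(2^a)^2$ is not a formal consequence of the vanishing implication \eqref{mul}; it requires the explicit identities of \cite{JLLRW} relating the coefficients of $\g$ and $\fh$ at $2$-power multiples to those of $\f$, and you give no mechanism for it beyond calling it ``the quantitative analogue.'' Second, your arithmetic step claims $|\lambda_f(t)|\ll_{\f,\varrho}t^{\varrho}$ \emph{uniformly for} $f=\f,\g,\fh$, but \eqref{varrho} defines $\varrho$ only through the coefficients of $\f$ at the cusp $\infty$; extending it to $\g$ and $\fh$ (which need not lie in the Kohnen plus space, so the Kohnen--Zagier formula does not apply off the shelf) is itself one of the main results of \cite{JLLRW} and cannot be treated as a matter of definition. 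Note also that once the first displayed inequality of the lemma is in hand, the passage to $t^{\varrho}\tau(r)^2$ needs exactly these squarefree bounds for all three forms, so this is not a cosmetic issue. Your outline is sound, but as written it defers the two genuinely delicate points to the very reference the lemma is quoted from.
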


Here $\varrho$ denotes the exponent for which 
\begin{equation}\label{varrho}
\lambda_\f (t) \ll_\varrho t^{\varrho}\qquad \mbox{ $\forall$ $t$ squarefree},
\end{equation}
i.e. the bound towards the Ramaujan Conjecture for the half-integral weight Hecke eigenforms. 
The conjectural value is $\varrho=\vep$. 
Conrey \& Iwaniec \cite{ConreyIwaniec2000} obtained $\varrho=\frac16+\vep$.  

Let $d\ge 1$ be an integer and $(u,d)=1$. Define the twisted $L$-function for $\f$ by 
\begin{equation}\label{eq3.6}
L_\f(s, u/d) = \sum_{m\geq 1} \frac{\lambda_{\f}(m) \ee(m u/d)}{m^s}
\qquad \mbox{ $(\re s>1)$}
\end{equation}
and define similarly for $\g$ and $\fh$. 
These twisted $L$-functions when attached with suitable factors may be expressed as integrals of $\f$ along vertical geodesics, and extend to entire functions, cf. \cite[(4.4)-(4.5)]{HKKL2012}.  Moreover Hulse et al  found the  functional equation for $L_\f(s,u/d)$, which is put in the following form 
\begin{equation}\label{fe}
q_d^s L_\infty (s) L_\f(s, u/d) = \ic^{-(\ell+1/2)} q_d^{1-s} L_\infty(1-s) \widetilde{L}_\f (1-s, v/d),
\end{equation}
where  $uv\equiv 1\,(\bmod\,{d})$ and 
$L_\infty (s) := (2\pi)^{-s} \Gamma\big(s+\tfrac{\ell}2-\tfrac14)$ is the gamma factor, cf. \cite[Lemma 4.3]{HKKL2012}  
and \cite{JLLRW}. 
The conductor $q_d$ and the dual $L$-function $\widetilde{L}_\f(s,v/d)$ are defined as follows:
\begin{eqnarray}\label{qd}
q_d=d \ \mbox{  or } \ 2d\ \mbox{ according to $4\mid d$ or not,}
\end{eqnarray}
and
\begin{equation}\label{Ltilde}
\widetilde{L}_\f(s,v/d) := \sum_{n\ge 1} \lambda(n; d) \varpi_d(n,v) n^{-s},
\end{equation}
where 
\begin{equation}\label{lambdafd}
{\renewcommand{\arraystretch}{1.8}
\renewcommand{\tabcolsep}{2.5mm}
\begin{tabular}{|c|c|c|c|}
\hline
{}
& $\lambda(n; d)$ 
& $\varpi_d(n,v)$
\\
\hline
$4\mid d$
& $\lambda_{\f}(n)$  
& $\vep_v^{2\ell+1} \big(\tfrac{d}v\big) \ee\big(\tfrac{-nv}{d}\big)$  
\\
\hline
$2\,\|\,d$
& $\lambda_{\g}(n)$  
& $\vep_v^{2\ell+1} \big(\tfrac{d}v\big) \ee\big(\tfrac{-nv}{4d}\big)$
\\
\hline
$2\nmid d$
& $\lambda_{\fh}(n)$  
& $\ic^{\ell+1/2} \vep_d^{-(2\ell+1)} \big(\tfrac{v}d\big) \ee\big(\tfrac{-\overline{4}nv}{d}\big)$  
\\
\hline
\end{tabular}
}
\end{equation}
with $4\overline{4}\equiv 1\,(\bmod\,{d})$. 

In \cite{HKKL2012}, Hulse et al applied $L_\f(s,u/d)$ to obtain the analytic properties of $L_\f^\flat(s)$, 
which was sharpened to the following result \cite[Theorem 1]{JLLRW}. 
\begin{lemma}\label{main}
For a complete Hecke eigenform ${\mathfrak f}\in \cush$, the series $L_\f^\flat(s)$ extends analytically to a holomorphic function on $\Re e\, s>\tfrac{1}{2}$, and for any $\vep>0$,
\begin{equation}\label{UBMs}
L_\f^\flat(s)
\ll_{{\mathfrak f}, \varepsilon} (|\tau|+1)^{1-\sigma+2\varepsilon}
\qquad
(\tfrac1{2}+\varepsilon\le\sigma\le 1+\vep, \tau\in \R),
\end{equation}
where the implied constant depends on ${\mathfrak f}$ and $\varepsilon$ only.
\end{lemma}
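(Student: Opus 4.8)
The plan is to reduce the squarefree-supported series \eqref{def:Lfbs} to the additively twisted $L$-functions $L_\f(s,u/d)$ of \eqref{eq3.6}, whose analytic continuation to entire functions and functional equation \eqref{fe} are already available, and then to run that functional equation against cancellation among the Kloosterman--Sali\'e sums $\varpi_d(n,v)$. First I would detect squarefreeness by $\mu^2(t)=\sum_{a^2\mid t}\mu(a)$ and each divisibility $a^2\mid t$ by additive characters modulo $a^2$. Working in $\re s>1$, where absolute convergence is secured by \eqref{msq}, this produces the exact identity
\begin{equation*}
L_\f^\flat(s)=\sum_{a\ge1}\frac{\mu(a)}{a^2}\sum_{u\bmod a^2}L_\f(s,u/a^2),
\end{equation*}
exhibiting $L_\f^\flat$ as a weighted average of entire twisted $L$-functions; a preliminary (routine) step is to reduce each fraction $u/a^2$ to lowest terms so that \eqref{fe} becomes applicable.

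Next I would treat the $\tau$-aspect of \eqref{UBMs}, which turns out to be cheap. On the line $\sigma=1+\vep$ one has $L_\f(s,u/d)\ll_{\f,\vep}1$ uniformly in $u$ and $d$, again by \eqref{msq}; reflecting through \eqref{fe} and interpolating by Phragm\'en--Lindel\"of gives the convexity bound
\begin{equation*}
L_\f(s,u/d)\ll_{\f,\vep}\bigl(d(|\tau|+1)\bigr)^{1-\sigma+\vep}\qquad(\tfrac12+\vep\le\sigma\le1+\vep).
\end{equation*}
Its $\tau$-power $(|\tau|+1)^{1-\sigma+\vep}$ already matches the exponent in \eqref{UBMs} and is untouched by the summations over $u$ and $a$, so the entire difficulty is concentrated in the $a$-aspect.

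The hard part is the $a$-summation. The bare convexity bound is useless here: with $d=a^2$ it gives $\sum_a a^{-2}\,a^2\,(a^2)^{1-\sigma}=\sum_a a^{2(1-\sigma)}$ (suppressing the harmless $\tau$-factor), which diverges throughout $\sigma\le\tfrac32$, so honest cancellation is indispensable. I would obtain it by carrying each twist to its dual $\widetilde{L}_\f(1-s,v/a^2)$ via \eqref{fe} and then summing over the residues $u$ (equivalently over the inverse classes $v$) \emph{before} estimating. The additive exponentials inside $\varpi_d(n,v)$ of \eqref{lambdafd}, together with the quadratic factors $\vep_v^{2\ell+1}\bigl(\tfrac{d}{v}\bigr)$, assemble into complete Sali\'e-type sums in the dual variable $n$; these are of square-root size and concentrate $n$ on the sparse set of near-squares modulo $a^2$. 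It is this collapse that restores the summability of the $a$-series and, together with the entireness of the individual twists, yields a holomorphic function on $\re s>\tfrac12$. The abscissa $\tfrac12$ appears precisely as the point below which the resulting (square-density) $a$-summation stops converging.

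Accordingly, I expect the genuine obstacle to be the uniform control of these averaged Sali\'e sums: one must extract square-root cancellation in the $u$-sum with enough uniformity in $a$ and in $\tau$ to push convergence all the way down to $\sigma=\tfrac12+\vep$ while preserving the clean exponent of \eqref{UBMs}. A secondary but unavoidable complication is the threefold local behaviour at the prime $2$ recorded in \eqref{lambdafd} --- the cases $4\mid d$, $2\,\|\,d$, $2\nmid d$ bring in the three cusp expansions $\f,\g,\fh$ and their distinct Sali\'e factors --- so the cancellation argument must be run in parallel for each. Throughout, the pointwise estimate of Lemma~\ref{lem3.1} and the vanishing propagation \eqref{mul} furnish the routine bounds that keep the off-diagonal and tail contributions negligible.
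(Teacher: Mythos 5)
A preliminary remark: the paper itself does not prove Lemma~\ref{main}; it imports it verbatim from \cite[Theorem 1]{JLLRW}, and explicitly describes the route through the additive twists $L_\f(s,u/d)$ as the one used by Hulse--Kiral--Kuan--Lim which \cite{JLLRW} then \emph{sharpened} to reach $\re s>\tfrac12$. Your opening identity $L_\f^\flat(s)=\sum_{a\ge1}\mu(a)a^{-2}\sum_{u\,(\mathrm{mod}\,a^2)}L_\f(s,u/a^2)$ is correct, but it is exactly that unsharpened starting point, and the mechanism you propose for pushing the abscissa down to $\tfrac12$ --- cancellation in the $u$-averaged Sali\'e sums --- does not deliver. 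Two concrete problems. First, the claimed ``concentration on near-squares'' is illusory in this situation: after reduction to lowest terms the $u$-average is $\sum_{d\mid a^2}\mathcal{L}_\f(s,0/d)$ in the notation of \eqref{calL}, i.e.\ the case $a=0$ of the paper's setup, and for $d$ odd and a perfect square the symbol $\bigl(\tfrac{v}{d}\bigr)$ is trivial, so ${\rm K}(0,n;d)$ of \eqref{def:Kand} degenerates to a Ramanujan sum in $n$: it vanishes unless $(n,d)>1$ and is of size comparable to the gcd there. That support restriction and the Weil-quality size are all the cancellation available, and they are already encoded in \eqref{Kbound} and Lemma~\ref{convex}. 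Second, this is quantitatively insufficient. The $a$-th term of your decomposition equals $\mu(a)a^{-2s}\sum_m\lambda_\f(a^2m)m^{-s}$, so at $\sigma=\tfrac12+\vep$ you need the inner sum to be $O_\vep(a^{\vep})$ uniformly in $a$ --- a Lindel\"of-quality bound in the $a$-aspect for an object of conductor $\asymp a^4$. What the functional equation \eqref{auxLfe} plus these sum bounds actually give is convexity: even granting the most optimistic estimates $O(a^{-2+\vep})$ on $\sigma=1+\vep$ and $O(a^{O(\varrho)+\vep}(1+|\tau|)^{1+\vep})$ on $\sigma=-\vep$, Phragm\'en--Lindel\"of interpolation yields an $a$-exponent at least $-1$ at $\sigma=\tfrac12$, so the $a$-series diverges there even under the Ramanujan conjecture $\varrho=\vep$; the method stalls at roughly $\re s>\tfrac34$, which is essentially the region Hulse et al.\ reached.

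The missing idea is the one your proposal never invokes: the hypothesis that $\f$ is a \emph{complete} Hecke eigenform. The uniform bound $\sum_m\lambda_\f(a^2m)m^{-s}\ll_\vep a^{\vep}$ for $\re s>\tfrac12$ comes from multiplicative, not additive, structure: the $\mathsf{T}(p^2)$-eigenrelations and the Shimura correspondence express $\lambda_\f(a^2m)$ through $\lambda_\f$ at the squarefree kernel times Deligne-bounded Hecke eigenvalues of the weight-$2\ell$ lift --- this is precisely the input recorded in \eqref{mul} and Lemma~\ref{lem3.1} --- and the resulting Euler factors are Dirichlet series in $2s$, convergent exactly for $\re s>\tfrac12$, which is where the abscissa in the lemma comes from. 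The $\tau$-aspect of your argument (trivial bound on $\sigma=1+\vep$, reflection, Phragm\'en--Lindel\"of) is fine; it is the $a$-aspect that cannot be closed by cancellation in the averaged Sali\'e sums alone.
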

\begin{remark} Using  Lemma~\ref{main} in place of \cite[Proposition 7]{LauRoyerWu2014}, the estimate in \eqref{mean} follows plainly from the same argument as in \cite[Section 4.1]{LauRoyerWu2014}, so we do not repeat here. 
\end{remark}

\vskip 8mm

\section{Proof of Theorem \ref{thm1}}\label{Sthm1}

We start with the following lemma where the $O$-term in \eqref{2nd_moment} is smaller than \cite[(14)]{LauRoyerWu2014}.
\begin{lemma}\label{lem1}
Let \(\ell\geq 2\)  be a positive integer and \(\f\in\cush\) be  a complete Hecke eigenform.
Then for any $\varepsilon>0$ and all $x\ge 2$, we have
\begin{equation}\label{2nd_moment}
\sum_{n\le x} |\lambda_{\f}(n)|^2
= D_{\f} \, x + O_{\f, \varepsilon}\big(x^{3/4+\varepsilon}\big),
\end{equation}
where $D_{\f}$ is a positive constant depending on $\f$.
\end{lemma}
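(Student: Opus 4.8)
The plan is to read off both the main term and the error in \eqref{2nd_moment} from the generating Dirichlet series
\[
D(s) := \sum_{n\ge 1}\frac{|\lambda_{\f}(n)|^2}{n^s},
\]
whose summatory function is precisely the left-hand side of \eqref{2nd_moment}. The constant $D_{\f}$ should be the residue of a simple pole of $D(s)$ at $s=1$, and the size of the error will be governed by how far to the left of $\re s=1$ one can push a line of integration in a smoothed Perron formula. I would \emph{not} attempt to factor $D(s)$ into standard automorphic $L$-functions: the squarefree coefficients $\lambda_{\f}(t)$ are essentially square roots of central values of quadratic twists and carry no Euler product, so $D(s)$ is not itself a product of $L$-functions. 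Instead I would obtain its analytic properties by the Rankin--Selberg method, integrating $|\f(z)|^2 y^{\ell+1/2}$ against a real-analytic Eisenstein series for $\Gamma_0(4)$. This represents $D(s)$, gives continuation past $\re s=1$ with a simple pole at $s=1$ whose residue is a positive multiple of $\|\f\|^2$ (to be identified with $D_{\f}$), and yields a functional equation $s\mapsto 1-s$ together with polynomial growth in vertical strips by Phragm\'en--Lindel\"of; the expansions \eqref{gfe} and \eqref{hfe} at the cusps $-\tfrac12$ and $0$ enter when one unfolds and computes the constant term, hence the residue.

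With $D(s)$ in hand, the error in \eqref{2nd_moment} comes from shifting the contour to $\re s=\tfrac34$ and estimating the remaining integral. The whole point of the improvement from $\tfrac34+\varrho$ to $\tfrac34$ is to control $D(s)$ on the line $\re s=\tfrac34$ by a \emph{mean-value} (second moment) bound rather than by the pointwise estimate \eqref{varrho}. Concretely, writing $n=tr^2$ with $t$ squarefree and invoking Lemma~\ref{lem3.1} in the $\varrho$-free form $|\lambda_{\f}(tr^2)|\ll|\lambda_{\f}(t)|\,\tau(r)^2$ (the factor $\tau(r)^2$ coming from Deligne's bound for the integral-weight Shimura lift), the mean square over the squarefree part is linear by \eqref{msq}, and summing the tail in $r$ gives a contribution $\ll x\,R^{-1+\varepsilon}$ uniformly, with no appearance of $\varrho$. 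This is exactly the step where \cite{LauRoyerWu2014} invoked $\lambda_{\f}(t)\ll_{\varepsilon}t^{\varrho}$ and thereby lost a factor $x^{\varrho}$; replacing that pointwise bound by \eqref{msq} together with Cauchy--Schwarz removes it.

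The main obstacle will be the analytic input of the first paragraph combined with pinning the exponent at exactly $\tfrac34$. I expect the continuation and the functional equation to require care because of the level-$4$ structure and the three inequivalent cusps, and the residue must be shown to be positive and correctly normalised so that $D_{\f}>0$. Once $D(s)$ is continued with polynomial growth, the exponent $\tfrac34$ should emerge as the abscissa at which a second-moment (Cauchy--Schwarz) estimate for $D(s)$, fed by \eqref{msq}, still forces the shifted integral to be $\ll_{\varepsilon} x^{3/4+\varepsilon}$ after optimising the truncation height against the pole at $s=1$; verifying this balance and the accompanying mean-value bound on the line $\re s=\tfrac34$ is where the remaining work, and the sharpness of the method, lie.
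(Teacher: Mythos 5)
Your overall frame --- study $D(\f\otimes\overline{\f},s)=\sum_n|\lambda_\f(n)|^2n^{-s}$, take $D_\f$ to be the residue of its simple pole at $s=1$ (obtained by Rankin--Selberg unfolding against an Eisenstein series for $\Gamma_0(4)$, which is indeed the source of the analytic input the paper imports from \cite[Proposition~7]{LauRoyerWu2014}), and then shift a contour --- agrees with the paper. But the mechanism you give for landing on the exponent $\tfrac34$ has a genuine gap. The paper does \emph{not} stop the contour at $\re s=\tfrac34$ and needs no mean-value bound for $D(s)$ on any line: it replaces the sharp cutoff by smooth weights $w_\pm$ with transition width $Y$, whose Mellin transforms satisfy $\widehat{w_\pm}(s)\ll_j YX^{\sigma-1}(X/(|s|Y))^j$, moves the contour all the way to $\kappa=\tfrac12+\vep$ (the edge of the region where the series is known to be holomorphic), and bounds the shifted integral by the convexity estimate $D(\f\otimes\overline{\f},s)\ll(1+|\tau|)^{2\max(1-\sigma,0)+\vep}$. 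The resulting error is $O(Y+X^{1+\kappa}Y^{-1})$, and $\tfrac34$ emerges as the optimal smoothing width $Y=X^{3/4}$, not as a contour abscissa. If you actually halt the contour at $\sigma=\tfrac34$, convexity gives only $D(\tfrac34+\ic\tau)\ll(1+|\tau|)^{1/2+\vep}$, the vertical integral costs about $x^{3/4}(x/Y)^{1/2+\vep}$, and balancing against the smoothing loss $O(Y)$ yields $x^{5/6+\vep}$ --- strictly weaker than the claimed bound.

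Your second paragraph also misdiagnoses where the $\varrho$ was lost and proposes a repair that does not plug into the contour argument. The paper's proof of Lemma~\ref{lem1} never invokes \eqref{varrho}, never decomposes $n=tr^2$ via Lemma~\ref{lem3.1}, and never uses \eqref{msq}. The $x^\varrho$ in the earlier version came from the truncation error of an unsmoothed Perron formula, where the terms with $n$ near $x$ force a pointwise bound on $|\lambda_\f(n)|^2$; attempting to replace that by \eqref{msq} over short intervals is circular, since a short-interval mean square of quality $x^{3/4}$ is precisely what is being proved, and there is no direct route from \eqref{msq} to a ``second-moment bound for $D(s)$ on a vertical line'' by Cauchy--Schwarz. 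The single decisive idea you are missing is the smooth cutoff: it removes the Perron truncation error entirely, after which the already-available convexity bound valid down to $\sigma>\tfrac12$ does all the remaining work.
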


\begin{proof}
We choose two smooth compactly supported functions $w_{\pm}$ such that
\begin{itemize}
\item
$w_{-}(x)=1$ for $x\in[X+ Y,2X-Y], w_{-}(x)=0$ for $x\geq 2X$ and $x\leq X$; 
\item
$w_{+}(x)=1$ for $x\in[X,2X], w_{+}(x)=0$ for $x\geq 2X+Y$ and $x\leq X-Y$;
\item
$w_{\pm}^{(j)}(x)\ll_j Y^{-j}$ for all $j\geq 0$;
\item
the Mellin transform of $w(x)$ is
\begin{equation}\label{UB_Mw}
\begin{aligned}
\widehat{w_{\pm}}(s)
& := \int_0^\infty w_{\pm}(x)x^{s-1} \d x
\\
& = \frac{1}{s\cdots (s+j-1)}\int_0^\infty w_{\pm}^{(j)}(x)x^{s+j-1} \d x
\\
& \ll_j \frac{Y}{X^{1-\sigma}} \left(\frac{X}{|s|Y}\right)^j\quad \mbox{$\forall$ $j\ge 1$};
\end{aligned}
\end{equation}
\item 
trivially $\widehat{w_{\pm}}(s)\ll X^\sigma$ and
\begin{equation}\label{X+Y}
\widehat{w_{\pm}}(1)=X+O(Y).
\end{equation}
\end{itemize}

Obviously we have
\begin{equation}\label{smooth}
\sum_{n} |\lambda_{\f}(n)|^2 w_{-}(n)
\leq \sum_{X<n\le 2X} |\lambda_{\f}(n)|^2 
\leq \sum_{n} |\lambda_{\f}(n)|^2 w_{+}(n).
\end{equation}
Let the Dirichlet series associated with $|\lambda_{\f}(n)|^2$ be defined as (see e.g. \cite[(11)]{LauRoyerWu2014})
\begin{equation*}
D(\f\otimes \overline{\f}, s) = \sum_{n=1}^\infty |\lambda_{\f}(n)|^2 n^{-s}.
\end{equation*}
By the Mellin inversion formula
$$
w_{\pm}(x) = \frac{1}{2\pi\ic} \int_{2-\ic\infty}^{2+\ic\infty} \widehat{w_{\pm}}(s) x^{-s} \d s,
$$
we write
$$
\sum_{n} |\lambda_{\f}(n)|^2 w_{\pm}(n)
= \frac{1}{2\pi \ic}\int_{(2)} \widehat{w_{\pm}}(s) D(\f\otimes \overline{\f}, s) \d s.
$$
With the help of Cauchy's residue theorem, we obtain that
\begin{equation}
\sum_{n} \lambda_{\f}(n)^2w_{\pm}(n)
= D_{\f}\widehat{w_{\pm}}(1) +\frac{1}{2\pi \ic}\int_{(\kappa)}\widehat{w_{\pm}}(s) D(\f\otimes \overline{\f}, s) \d s,
\end{equation}
where $\tfrac{1}{2}<\kappa <1$ and $D_{\f} := \text{Res}_{s=1}  D(\f\otimes \overline{\f}, s)$.
By \eqref{X+Y}, \eqref{UB_Mw} with $j=2$ and 
the convexity bound \cite[Proposition 7]{LauRoyerWu2014}
$$
D(\f\otimes \overline{\f}, s)\ll _{\f,\vep }(1+|\tau|)^{2\max(1-\sigma,0)+\vep}
\qquad
(\tfrac{1}{2}<\sigma \leq 3),
$$  we derive
$$
\sum_{n} |\lambda_{\f}(n)|^2 w_{\pm}(n) 
= D_{\f}X+O_{\f,\vep}\big(Y + X^{1+\kappa} Y^{-1}\big).
$$
Taking $\kappa = \tfrac{1}{2}+\vep$ and $Y=X^{3/4}$, and combining the obtained estimation with \eqref{smooth}, we find that
$$
\sum_{X<n\le 2X} |\lambda_{\f}(n)|^2 
= D_{\f}X+O_{\f,\vep}\big(X^{3/4+\varepsilon}\big),
$$
which implies \eqref{2nd_moment} after a dyadic summation.
\end{proof}

Now we return to prove the theorem. Take $h=x^\eta$ where $\eta >\tfrac{3}{4}$ is specified later.  
Lemma~\ref{lem1} gives
\begin{equation*}
{\rm (i)} \quad  Ch \leq \sum_{x\leq n\leq x+h} \lambda_{\f}(n)^2 
\quad \mbox{ and }\quad 
{\rm (ii)} \quad \sum_{x/m^2\leq t\leq (x+h)/m^2} \lambda_{\f}(n)^2 \ll hm^{-3/2}
\end{equation*}
for any $m\le \sqrt{x+h}$, 
where the positive constant $C$ and the implied $\ll$-constant depend on $\f$ and $\eta$ only. 
Combining (i) with Lemma~\ref{lem3.1} leads to
\begin{equation*}
Ch 
\leq \sum_{x\leq n\leq x+h} \lambda_{\f}(n)^2 
\leq C'\sum_{m\leq \sqrt{x+h}}\tau(m)^4 
\sideset{}{^\flat}\sum_{x/m^2\leq t\leq (x+h)/m^2} \lambda_{\f}(t)^2
\end{equation*}
where $\sum^\flat$ confines the running index over squarefree integers only and 
$C'>0$ is a constant depending at most on $\f$. 
By (ii) and the fact $\sum_{m\ge A} \tau(m)^4 m^{-3/2} \gg A^{-1/2+\vep}$, 
we conclude that for a large enough constant $A$, 
\begin{equation*}
\sum_{m\leq A}\tau(m)^4 \sumb_{x/m^2\leq t\leq (x+h)/m^2} \lambda_{\f}(t)^2 
\ge \{C/C' + O(A^{-1/2+\vep})\} h 
\gg h
\end{equation*}
which is \cite[(23)]{LauRoyerWu2014}. 
Thus, repeating the same argument (in \cite[(24)-(26)]{LauRoyerWu2014}),  
we obtain \cite[(26)]{LauRoyerWu2014} with a smaller admissible $h=x^\eta$ 
(here $\eta>\tfrac{3}{4}$ is required instead of $\eta>\tfrac{3}{4}+\varrho$).

Next we note that the new estimate \eqref{mean} improves the upper bound $h^{3/4} x^\vep$ in \cite[(21) of Section 4.2]{LauRoyerWu2014} to $h^{1/2} x^\vep$. Consequently, we get the new lower bound  
$$
x^{-1-\varrho}h^2 +O(h^{1/2}x^\vep)
$$
for \cite[(27)]{LauRoyerWu2014}. The optimal choice of $\eta$ is $\tfrac{2}{3}(1+\varrho)+\vep$, 
and together with the constraint $\eta>\tfrac{3}{4}$, we  choose
$$
\eta = \max\big\{\tfrac{2}{3}(1+\varrho), \tfrac34\big\}+\vep.
$$
We complete the proof of Theorem~\ref{thm1} with the same argument in remaining part of \cite[Section 4.2]{LauRoyerWu2014}.

\vskip 8mm

\section{Preparation for the truncated Voronoi formula}

Applying the additive character to replace the congruence condition, that is,
$$
Q^{-1} \sum_{d\mid Q} \sumstar_{u\,({\rm mod}\,d)} \ee\bigg(\frac{u(n-a)}d\bigg)
= \delta_{n\equiv a\, ({\rm mod}\, Q)} 
$$
where $\delta_*=1$ if $*$ holds and $0$ otherwise, we have
\begin{equation}\label{asum}
\mathcal{S}_\f^\mathcal{A}(x)
:= \sum_{\substack{n\le x\\ n\equiv a ({\rm mod}\,Q)}} \lambda_{\f}(n)
= Q^{-1} \sum_{d\mid Q} \mathcal{S}_\f(x, a/d),
\end{equation} 
where 
\begin{equation}\label{def_FirstMoment}
\mathcal{S}_{\f}(x,a/d) := \sideset{}{^*}\sum_{u\,({\rm mod}\, d)} \ee\bigg(\frac{-au}d\bigg) 
\sum_{n\le x} \lambda_{\f}(n) \ee\bigg(\frac{nu}d\bigg).
\end{equation}
Here $\sumstar_{\hskip -1,5mm u ({\rm mod}\, d)}$ denotes the sum over $u\,(\bmod\,{d})$ with $(u,d)=1$. 
The inner sum over $n$ is clearly associated with $L_\f(s,u/d)$, thus we  introduce the auxiliary function
\begin{eqnarray}\label{calL}
\mathcal{L}_\f(s,a/d):= \sideset{}{^*}\sum_{u\, ({\rm mod}\, d)} \ee\!\left(-\frac{au}d\right)  L_\f(s,{u}/d).
\end{eqnarray}
The Dirichlet series associated to $\mathcal{S}^\mathcal{A}(x)$,
\begin{equation}\label{defLsf}
L_\f(s,a,Q) := \sum_{\substack{n\ge 1\\ n\equiv a ({\rm mod} \, Q)}} \lambda_{\f}(n) n^{-s}
\end{equation}
is equal to  
\begin{equation}\label{LL}
L_\f(s,a,Q) = Q^{-1} \sum_{d\mid Q} \mathcal{L}_\f(s,a/d).
\end{equation}

Plainly $\mathcal{L}_\f(s,a/d)$ satisfies a functional equation by \eqref{fe},
\begin{eqnarray}\label{auxLfe}
q_d^{s} L_\infty(s) \mathcal{L}_\f(s,a/d) = \ic^{-(\ell+1/2)} q_d^{1-s} L_\infty(1-s)  \widetilde{\mathcal{L}}_\f(1-s,a/d) 
\end{eqnarray}
where $\widetilde{L}_\f (s, v/d)$ is defined as in \eqref{Ltilde} and
\begin{equation*}
\widetilde{\mathcal{L}}_\f(s,a/d) = \sumstar_{u\, ({\rm mod}\, d)} \ee\left(-\frac{au}d\right) 
\widetilde{L}_\f (s, \overline{u}/d)\qquad 
(u\overline{u}\equiv 1\,(\bmod\,{d})).
\end{equation*}
When  $\re s > 1$, we may express $\widetilde{\mathcal{L}}_\f(s,a/d)$ as a Dirichlet series whose coefficients are products of $\lambda(n; d)$ and the Kloosterman-Sali\'e sums. Indeed,  by \eqref{Ltilde}, we have
\begin{eqnarray}\label{auxLs}
\widetilde{\mathcal{L}}_\f(s,a/d) =  \sum_{n\ge 1} \lambda(n; d) {\rm K}(a, n; d) n^{-s} 
\end{eqnarray}
where (noting $v=\overline{u}\,(\bmod\,{d})$),
\begin{equation}\label{def:Kand}
{\rm K}(a, n; d) 
:=  \sideset{}{^*}\sum_{u\, ({\rm mod}\, d)} \varpi_d(n,\overline{u}) \ee\!\left(-\frac{au}d\right).
\end{equation}
By \eqref{lambdafd}, 
$$
{\rm K}(a, n; d)
= \left\{\begin{array}{ll}
\displaystyle  \sumstar_{u\, ({\rm mod}\, d)} \vep_u^{2\ell+1} 
\left(\frac{d}{u}\right) \ee\!\left(-\frac{a\overline{u}+ nu}{4d}\right) 
& \mbox{ if $4\mid d$}, 
\\\noalign{\vskip 0,1mm}
\displaystyle  \sumstar_{u\, ({\rm mod}\, d)} \vep_u^{2\ell+1} 
\left(\frac{d}{u}\right) \ee\!\left(-\frac{4a\overline{u}+ nu}{4d}\right)
&  \mbox{ if $2\,\|\,d$}, 
\\\noalign{\vskip 0,1mm}
\ic^{\ell+1/2} \vep_d^{-(2\ell+1)} 
\displaystyle  \sumstar_{u\, ({\rm mod}\, d)} 
\bigg(\frac{u}{d}\bigg) \ee\!\left(-\frac{a\overline{u}+\overline{4} nu}d\right)  
& \mbox{ if $2\nmid d$}. 
\end{array}
\right.
$$   

\begin{lemma} 
Let $\tau(d)$ be the divisor function. 
We have 
\begin{equation}\label{Kbound}
|{\rm K}(a, n; d)|\ll (d,n)^{1/2} d^{1/2} \tau(d).
\end{equation}
Moreover, for the case  $2\nmid d$,  if there exists $x\in \{a,n\}$ such that $(x,d)=1$, then
\begin{eqnarray}\label{Kform}
{\rm K}(a, n; d) = \ic^{\ell+1/2} \vep_d^{-2\ell} d^{1/2} \bigg(\frac{x}d\bigg) 
\sum_{y^2\equiv an ({\rm mod}\, d)} \ee\bigg(\frac{y}d\bigg).
\end{eqnarray}
\end{lemma}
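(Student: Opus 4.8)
The two assertions are, respectively, a Weil-type estimate and an explicit evaluation for the twisted Kloosterman--Sali\'e sums ${\rm K}(a,n;d)$, and I would prove both by the classical route for such sums: reduce to prime powers by twisted multiplicativity, evaluate or bound each local factor, and reassemble, with the bulk of the work lying in the arithmetic of the accompanying root-of-unity and quadratic-residue factors.

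\emph{Plan for \eqref{Kform}.} Here $d$ is odd, so only the third line of the displayed formula for ${\rm K}(a,n;d)$ is in force and no theta multiplier $\vep_u$ intervenes; thus ${\rm K}(a,n;d)$ equals $\ic^{\ell+1/2}\vep_d^{-(2\ell+1)}$ times the classical Sali\'e sum $\sum_{u}^{*}\left(\frac{u}{d}\right)\ee\!\left(-\frac{\overline{4}nu+a\overline{u}}{d}\right)$. First I would note that the map $u\mapsto\overline u$ fixes $\left(\frac{u}{d}\right)$ and interchanges the roles of $a$ and $\overline 4 n$, so this sum is symmetric in the two coefficients and I may assume the argument coprime to $d$ is $n$ (the case $(a,d)=1$ being the mirror image). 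By the Chinese Remainder Theorem the sum factors over the prime powers $p^{k}\,\|\,d$; at each $p^{k}$ I would evaluate the local Sali\'e sum by completing the square, which yields the quadratic Gauss sum $\vep_{p^{k}}\sqrt{p^{k}}$, the Legendre factor $\left(\frac{\overline{4}n}{p^{k}}\right)$, and the solution sum $\sum_{\nu^{2}\equiv\overline{4}an}\ee(2\nu/p^{k})$. Multiplying the local contributions back together and then performing the change of variable $y=2\nu$ --- legitimate because $(2,d)=1$, and turning $\nu^{2}\equiv\overline{4}an$ into $y^{2}\equiv an\pmod d$ --- collapses the solution sum to $\sum_{y^{2}\equiv an}\ee(y/d)$. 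Finally I would collect the accumulated powers of $\vep_d$ and the Legendre symbols, using $\vep_d^{2}=\left(\frac{-1}{d}\right)$ and $\left(\frac{4}{d}\right)=1$, to recover the clean prefactor $\ic^{\ell+1/2}\vep_d^{-2\ell}\left(\frac{x}{d}\right)$ with $x=n$, which is \eqref{Kform}.

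\emph{Plan for \eqref{Kbound}.} This I would derive uniformly in all three cases from the Weil bound for Sali\'e and Kloosterman sums. Twisted multiplicativity reduces the estimate to the case $d=p^{k}$. For odd $p$ the local sum is a Sali\'e sum, and factoring out the largest common power $p^{\,j}$ with $j=\min(v_{p}(a),v_{p}(n),k)$ leaves a Sali\'e sum whose two arguments are not both divisible by $p$; its modulus is $\ll p^{(k-j)/2}$ times the (bounded) number of relevant square roots, giving the local bound $\ll (a,n,p^{k})^{1/2}\,p^{k/2}$ up to an $O(1)$ factor. For $p=2$ (the cases $4\mid d$ and $2\,\|\,d$) the theta multiplier $\vep_u^{2\ell+1}$ is a function of $u$ modulo $8$, so the local sum is a short exponential sum twisted by a fixed character, which a direct Gauss-sum computation bounds by $\ll 2^{k/2}$ up to a bounded factor. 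Taking the product over the prime powers dividing $d$ then gives $|{\rm K}(a,n;d)|\ll\tau(d)\,(a,n,d)^{1/2}\,d^{1/2}$, the $\tau(d)$ absorbing the bounded per-prime factors; since $(a,n,d)\le(d,n)$, this is exactly \eqref{Kbound}.

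\emph{Main obstacle.} The genuinely delicate part is the bookkeeping of the unit and quadratic-residue factors, not the analytic estimates. In the even-modulus cases the product $\vep_u^{2\ell+1}\left(\frac{d}{u}\right)$ must be transformed by quadratic reciprocity into a symbol of the shape $\left(\frac{u}{d}\right)$ before the Gauss-sum machinery applies, and the Gauss sums at powers of $2$ require separate and careful treatment. Even in the clean odd case one must verify that the many factors $\ic^{\ell+1/2}$, $\vep_d^{-(2\ell+1)}$, $\vep_{p^{k}}$, and the various Legendre symbols assemble precisely into $\ic^{\ell+1/2}\vep_d^{-2\ell}\left(\frac{x}{d}\right)$, and that the result is independent of the choice of $x\in\{a,n\}$ when both are coprime to $d$; the latter is automatic, since $\sum_{y^{2}\equiv an}\ee(y/d)$ is empty unless $an$ is a square modulo $d$, which forces $\left(\frac{a}{d}\right)=\left(\frac{n}{d}\right)$ whenever the sum is nonzero.
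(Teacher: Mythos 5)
Your plan matches the paper's proof in substance: the paper likewise rewrites ${\rm K}(a,n;d)$ as (conjugates of) the classical Kloosterman--Sali\'e sums $K_{2\ell+1}$ and $S$, obtains \eqref{Kbound} from twisted multiplicativity together with the Weil-type local bounds (deferring the powers of $2$ to \cite{DeDeo} and \cite{CZ} --- note these are harder than a ``direct Gauss-sum computation'' and the gcd factor is already needed locally there, so your claimed local bound $\ll 2^{k/2}$ is not literally correct without it), and deduces \eqref{Kform} from the Sali\'e-sum evaluation in \cite[Lemma 4.9]{Iwaniec1997}, which is exactly the completing-the-square argument you sketch. The only step you gloss that the paper makes explicit is the case $2\,\|\,d$, where the exponential has denominator $4d$ rather than $d$, so one first enlarges the summation range to a reduced residue system modulo $4d$ and writes the sum as $\tfrac14\,\overline{K_{2\ell+1}(n,a;4d)}$ before applying the standard machinery.
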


\begin{proof}
We express ${\rm K}(a, n; d)$ in terms of Kloosterman-Sali\'e sums (see Appendix for their definitions), as follows:
\begin{equation}\label{KS}
{\rm K}(a, n; d)
= \left\{
\begin{array}{ll}
\overline{K_{2\ell+1}(n,a;d)} & \mbox{ for $\;4\mid d$},\vspace{1mm}
\\\noalign{\vskip 0,5mm}
\tfrac14 \overline{K_{2\ell+1}(n,a; 4d)} & \mbox{ for $\;2\,\| \,d$}, \vspace{1mm}
\\\noalign{\vskip 0,5mm}
 \ic^{\ell+1/2} \vep_d^{-(2\ell+1)}\, \overline{S (\overline{4}n, a; d)} & \mbox{ for $\;2\nmid d$},
\end{array}
\right.
\end{equation}
where in the case of $2\,\|\,d$, the range of summation is enlarged  to a reduced residue system $(\bmod\,{4d})$. 
From \eqref{eq:9.1} below, we have
\begin{equation}\label{Kbound}
|{\rm K}(a, n; d)|\ll (d,n)^{1/2} d^{1/2} \tau(d).
\end{equation}

The formula \eqref{Kform} follows from the result in \cite[Lemma 4.9]{Iwaniec1997} for the Sali\'e sum.
\end{proof} 

\begin{lemma}\label{convex}  
Let  $d\ge 1$ and $a$ be any integers.  
For any $\vep>0$, we have  
\begin{equation}\label{Lb1}
\mathcal{L}_\f(\sigma +\ic \tau,a/d) 
\ll d^{(3-\sigma)/2+2\vep} (1+|\tau|)^{1-\sigma+2\vep}
\quad (-\vep\le \sigma \le 1+\vep, \, \tau\in\R),
\end{equation}
where the implied $\ll$-constant depends on $\f$ and $\vep$ only. 
\end{lemma}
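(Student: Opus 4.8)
The plan is to prove \eqref{Lb1} by the Phragmén--Lindelöf convexity principle applied to the entire function $s\mapsto\mathcal{L}_\f(s,a/d)$ in the strip $-\vep\le\sigma\le 1+\vep$, interpolating between bounds on the two bounding vertical lines $\sigma=1+\vep$ and $\sigma=-\vep$. Because each $L_\f(s,u/d)$ extends to an entire function, the finite linear combination $\mathcal{L}_\f(s,a/d)$ of \eqref{calL} is entire and of finite order in $\tau$, so no residual term intervenes and the convexity principle is licensed; the target exponents $(3-\sigma)/2$ and $1-\sigma$ are exactly the linear interpolants of the edge exponents, so it suffices to nail both the $d$-aspect and the $\tau$-aspect on the two edges. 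On the right edge $\sigma=1+\vep$ the series converges absolutely: the mean-square estimate \eqref{msq} gives, by Cauchy--Schwarz, $\sum_{m\le x}|\lambda_\f(m)|\ll x$, hence $\sum_m|\lambda_\f(m)|m^{-1-\vep}\ll_\vep 1$, so $|L_\f(s,u/d)|\ll_\vep 1$ uniformly in $u$; summing the $\phi(d)\le d$ terms of \eqref{calL} yields $\mathcal{L}_\f(s,a/d)\ll_\vep d$ there.

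On the left edge $\sigma=-\vep$ I would invoke the functional equation \eqref{auxLfe} to write $\mathcal{L}_\f(s,a/d)=\ic^{-(\ell+1/2)}q_d^{1-2s}\bigl(L_\infty(1-s)/L_\infty(s)\bigr)\widetilde{\mathcal{L}}_\f(1-s,a/d)$. By \eqref{qd} the conductor factor satisfies $|q_d^{1-2s}|=q_d^{1+2\vep}\ll d^{1+2\vep}$, while Stirling's formula gives $|L_\infty(1-s)/L_\infty(s)|\asymp(1+|\tau|)^{1-2\sigma}=(1+|\tau|)^{1+2\vep}$ (the two $\e^{-\pi|\tau|/2}$ factors cancel). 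Since $1-\sigma=1+\vep>1$, the dual series \eqref{auxLs} converges absolutely; bounding the Kloosterman--Salié weights by \eqref{Kbound} and using $\sum_{n\le x}|\lambda(n;d)|^2\ll x$ from \eqref{msq} for each of $f=\f,\g,\fh$, everything reduces to the arithmetic sum $\sum_{n\ge 1}|\lambda(n;d)|\,(d,n)^{1/2}n^{-1-\vep}$.

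The crux is that this sum is only $\ll_\vep d^\vep$, not $d^{1/2}$. Writing $(d,n)^{1/2}=\sum_{e\mid(d,n)}g(e)$ with $g$ the multiplicative function for which $|g(e)|\le e^{1/2}$, substituting $n=em$, and invoking $\sum_{m\le M}|\lambda(em;d)|\ll(eM\cdot M)^{1/2}=e^{1/2}M$ (Cauchy--Schwarz together with \eqref{msq}), partial summation gives $\sum_m|\lambda(em;d)|m^{-1-\vep}\ll_\vep e^{1/2}$, so the whole expression is $\ll_\vep\sum_{e\mid d}e^{-\vep}\ll_\vep\tau(d)\ll_\vep d^\vep$. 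Hence $\widetilde{\mathcal{L}}_\f(1-s,a/d)\ll_\vep d^{1/2}\tau(d)\,d^\vep\ll_\vep d^{1/2+2\vep}$, and collecting the three factors gives $\mathcal{L}_\f(s,a/d)\ll_\vep d^{3/2+O(\vep)}(1+|\tau|)^{1+O(\vep)}$ on $\sigma=-\vep$.

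I expect the treatment of $(d,n)^{1/2}$ to be the main obstacle: the naive estimate $(d,n)^{1/2}\le d^{1/2}$ would cost an extra $d^{1/2}$ and produce the far weaker exponent $d^{2}$ rather than $d^{3/2}$, breaking the convexity target; the saving comes from the fact that $(d,n)^{1/2}$ is large only on the sparse set of $n$ divisible by a large divisor of $d$. With the two edge bounds $\mathcal{L}_\f\ll d$ at $\sigma=1$ and $\mathcal{L}_\f\ll d^{3/2}(1+|\tau|)$ at $\sigma=0$ (up to $\vep$-powers), Phragmén--Lindelöf interpolates the $d$-exponent linearly from $1$ to $\tfrac32$, giving $(3-\sigma)/2$, and the $\tau$-exponent linearly from $0$ to $1$, giving $1-\sigma$, which is precisely \eqref{Lb1}; absorbing the accumulated $\vep$-powers by relabelling $\vep$ finishes the proof. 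A last routine point to check is the uniform finite-order growth of $\mathcal{L}_\f(s,a/d)$ throughout the strip required to apply the convexity principle, which follows from the controlled growth of the gamma factors together with the functional equation \eqref{auxLfe}.
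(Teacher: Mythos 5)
Your proof is correct and follows essentially the same route as the paper: the trivial bound $\mathcal{L}_\f(s,a/d)\ll_\vep d$ on $\sigma=1+\vep$, the functional equation \eqref{auxLfe} combined with Stirling's formula and the Kloosterman--Sali\'e bound \eqref{Kbound} on $\sigma=-\vep$, and the Phragm\'en--Lindel\"of principle in between. The only difference is in the treatment of the sum $\sum_{n}|\lambda(n;d)|(n,d)^{1/2}n^{-1-\vep}$, which you handle by a divisor decomposition of $(d,n)^{1/2}$ plus Cauchy--Schwarz, while the paper uses the simpler inequality $|\lambda(n;d)|(n,d)^{1/2}\le|\lambda(n;d)|^2+(n,d)$ to reduce to two sums each $\ll\tau(d)$; both yield the same $d^{\vep}$ saving that you correctly identify as the crux.
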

\begin{proof} Let $\re s=1+\vep$. By \eqref{msq} and \eqref{eq3.6}, we have trivially $L_\f(s,u/d)\ll_\vep  1$ and with \eqref{calL}, $\mathcal{L}_\f(s,a/d)\ll_\vep  d$. 
Next for $\re s=-\vep$,   we infer from \eqref{auxLfe} and \eqref{auxLs} that 
$$
\mathcal{L}_\f(s,a/d)
= \ic^{-(\ell+1/2)} q_d^{1-2s} \frac{L_\infty(1-s)}{L_\infty(s)}
\sum_{n\ge 1} \frac{\lambda(n; d) {\rm K}(a, n; d)}{n^{1-s}}.
$$
Thus, with \eqref{Kbound} and Stirling's formula, it follows that
\begin{align*}
\mathcal{L}_\f(-\vep +\ic \tau,a/d)
& \ll (d^{3/2}(1+|\tau|))^{1+\vep}
\sum_{n\ge 1} |\lambda(n; d)|(n,d)^{1/2} n^{-(1+\vep)}
\\
& \ll (d^{3/2}(1+|\tau|))^{1+\vep}
\end{align*}
because $|\lambda(n; d)|(n,d)^{1/2}\le |\lambda(n; d)|^2 +(n,d)$, implying that the last summation is 
\begin{eqnarray*}
\ll \sum_{n\ge 1} |\lambda(n; d)|^2n^{-(1+\vep)} + \sum_{l\mid d} l^{-\vep} \sum_{n\ge 1}  n^{-(1+\vep)}\ll \tau(d).
\end{eqnarray*}
An application of Phragm\'en–Lindel\"of principle completes the proof. 
\end{proof}

\goodbreak
\vskip 8mm

\section{Truncated Voronoi formula}\label{vor}

This section is devoted to the Voronoi formulas. In order for a simpler form for the result, 
let us set, with the notation \eqref{def:Kand},
\begin{eqnarray}\label{eq7.3}
\phi_a(n,d) 
:= \sqrt{q_d}\, \ic^{-(\ell+1/2)}{\rm K}(a, n; d) \ll   (n,d)^{1/2} \tau(d)d
\end{eqnarray} 
by \eqref{Kbound}, and trivially $|\phi_a(n,d)|\le \sqrt{2} d^{3/2}$. We have the following result.

\begin{theorem}\label{Voronoi}
Let $\ell\ge 2$ be an integer and ${\mathfrak f}\in \cush$ be an eigenform of all Hecke operators.
Then for any $\varepsilon>0$, we have
\begin{equation}\label{VoronoiSf}
\begin{aligned}
\mathcal{S}_{\f}(x,a/d) 
& =  \frac{x^{1/4}}{\pi \sqrt{2}}  
\sum_{n\le M} \frac{\lambda(n; d)\phi_a(n,d)}{n^{3/4}} 
\cos\bigg(4\pi\frac{\sqrt{nx}}{q_d}-\frac{\ell+1}{2}\pi\bigg) 
\\
& \quad 
+ O_{{\mathfrak f}, \varepsilon}\big(x^{\varepsilon} d^2(x^{1/2+\varrho} M^{-1/2} + M^{\varrho})\big)
\end{aligned}
\end{equation}
uniformly for $2\le M\le x$ and $1\le d\le x^{1/2}$,
where $\varrho$ is defined as in \eqref{varrho}. 

Moreover for $1\le Q\le x^{1/2}$ and any integer $a$, 
\begin{align*}
\mathcal{S}_{\f}^\mathcal{A}(x) 
& = \frac{x^{1/4}}{\sqrt{2} \pi Q}   
\sum_{d\mid Q}  \sum_{n\le M} \frac{\lambda(n; d)  \phi_a(n,d)}{n^{3/4}}
\cos\bigg(4\pi\frac{\sqrt{nx}}{q_d}-\frac{\ell+1}{2}\pi\bigg) 
\\
& \quad 
+ O\big(x^{\varepsilon} Q (x^{1/2+\varrho} M^{-1/2} + M^{\varrho})\big).
\end{align*}
In particular, for $Q\le x^{\frac12-\varrho}$ and any $a$,  
\begin{equation}\label{MeanValue}
\mathcal{S}_{\f}^\mathcal{A}(x)\ll_{{\mathfrak f}, \varepsilon} Q^{1/3} x^{(1+\varrho)/3+\varepsilon}.
\end{equation}
\end{theorem}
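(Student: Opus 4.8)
The plan is to produce the formula for $\mathcal{S}_\f(x,a/d)$ by contour integration against the Dirichlet series $\mathcal{L}_\f(s,a/d)$ and then to move all the oscillation onto the dual side via the functional equation \eqref{auxLfe}. First I would write, by a truncated Perron/Mellin formula with $\re s=\kappa=1+\varepsilon$ and a truncation height $T$ to be coupled to $M$,
\[
\mathcal{S}_\f(x,a/d)=\frac{1}{2\pi\ic}\int_{\kappa-\ic T}^{\kappa+\ic T}\mathcal{L}_\f(s,a/d)\,\frac{x^s}{s}\,\dd s+(\text{Perron error}),
\]
the series converging absolutely on $\re s=\kappa$ since $\lambda_\f(n)\ll n^\varrho$. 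As each $L_\f(s,u/d)$ is entire, $\mathcal{L}_\f(s,a/d)$ is holomorphic, so I may shift to $\re s=-\varepsilon$, crossing only the simple pole of $x^s/s$ at $s=0$; its residue $\mathcal{L}_\f(0,a/d)\ll d^{3/2+\varepsilon}$ by Lemma~\ref{convex} is harmless. On the shifted line I substitute
\[
\mathcal{L}_\f(s,a/d)=\ic^{-(\ell+1/2)}q_d^{\,1-2s}\frac{L_\infty(1-s)}{L_\infty(s)}\,\widetilde{\mathcal{L}}_\f(1-s,a/d)
\]
and expand $\widetilde{\mathcal{L}}_\f(1-s,a/d)=\sum_{n\ge1}\lambda(n;d)\,{\rm K}(a,n;d)\,n^{s-1}$ by \eqref{auxLs}, which is legitimate on $\re s=-\varepsilon$ where $\re(1-s)=1+\varepsilon$.

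Integrating term by term, each $n$ contributes $\ic^{-(\ell+1/2)}q_d\,\lambda(n;d){\rm K}(a,n;d)\,n^{-1}$ times the Mellin–Barnes integral
\[
I(Y):=\frac{1}{2\pi\ic}\int_{(-\varepsilon)}(2\pi)^{2s-1}\frac{\Gamma(\tfrac{\ell}2+\tfrac34-s)}{\Gamma(\tfrac{\ell}2-\tfrac14+s)}\,\frac{Y^s}{s}\,\dd s,\qquad Y=\frac{nx}{q_d^2}.
\]
This is an antiderivative of a $J$-Bessel kernel of order $\ell-\tfrac12$; applying Stirling/stationary phase to the gamma-ratio should give, for large $Y$, the leading term $\tfrac{1}{\pi\sqrt2}\,Y^{1/4}\cos(4\pi\sqrt Y-\tfrac{\ell+1}2\pi)$ plus an expansion in descending powers of $\sqrt Y$. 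Since $\tfrac{q_d}{n}Y^{1/4}=q_d^{1/2}x^{1/4}n^{-3/4}$ and $\phi_a(n,d)=\sqrt{q_d}\,\ic^{-(\ell+1/2)}{\rm K}(a,n;d)$ by \eqref{eq7.3}, the leading term collapses to exactly $\tfrac{x^{1/4}}{\pi\sqrt2}\,\tfrac{\lambda(n;d)\phi_a(n,d)}{n^{3/4}}\cos(4\pi\sqrt{nx}/q_d-\tfrac{\ell+1}2\pi)$, the claimed summand.

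Truncating the series at $n\le M$ and coupling the Perron height to $M$ via $T\asymp\sqrt{Mx}/q_d$ (so that the saddle for $n=M$ sits at the endpoint) should yield the two error contributions: the contour/Perron error $\ll x^\varepsilon d^2\,x^{1/2+\varrho}M^{-1/2}$, where the factor $x^{1/2+\varrho}$ comes from the diagonal terms $n\asymp x$ estimated by $\lambda_\f(n)\ll n^\varrho$ (Lemma~\ref{lem3.1}), and the error $\ll x^\varepsilon d^2 M^\varrho$ from the non-oscillatory and lower-order pieces of the asymptotic of $I(Y)$ summed over $n\le M$ using $\lambda(n;d)\ll n^\varrho$ and \eqref{Kbound}. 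The horizontal segments at $\im s=\pm T$ are absorbed into the first of these by Lemma~\ref{convex} and Stirling. The second displayed formula then follows by inserting this into \eqref{asum} and summing over $d\mid Q$, using $\sum_{d\mid Q}d^2\ll Q^{2+\varepsilon}$.

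For \eqref{MeanValue} the decisive point is \emph{not} to bound the main term trivially (that loses a factor $M^\varrho$ and yields the wrong $Q$-exponent), but to estimate $\sum_{n\le M}n^{-3/4}|\lambda(n;d)|(n,d)^{1/2}$ by Cauchy–Schwarz against the mean-square \eqref{msq} $\sum_{n\le M}|\lambda(n;d)|^2\sim M$, which gives $\ll M^{1/4+\varepsilon}d^\varepsilon$ with no $\varrho$; since $Q^{-1}\sum_{d\mid Q}\tau(d)d\ll Q^\varepsilon$, the whole main term is $\ll x^{1/4}M^{1/4+\varepsilon}$ with the $Q$-dependence washing out. Balancing this against the error $Qx^{1/2+\varrho}M^{-1/2}$ at $M\asymp Q^{4/3}x^{(1+4\varrho)/3}$ — which satisfies $M\le x$ precisely when $Q\le x^{1/2-\varrho}$ — produces $Q^{1/3}x^{(1+\varrho)/3+\varepsilon}$, and on the same range the $M^\varrho$-term is dominated. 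The step I expect to be hardest is the passage from $I(Y)$ to the explicit cosine: making the stationary-phase/Stirling analysis of the gamma-ratio rigorous with an error uniform in both $n$ and $d$ and small enough to sum, and pinning down the constant $\tfrac1{\pi\sqrt2}$ and the phase $\tfrac{\ell+1}2\pi$. By contrast the Kloosterman–Salié sums ${\rm K}(a,n;d)$ ride along passively as coefficients, needed only through the size bound \eqref{Kbound}.
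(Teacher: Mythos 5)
Your proposal follows essentially the same route as the paper: truncated Perron at $\kappa=1+\varepsilon$ with $T\asymp\sqrt{Mx}/q_d$, contour shift to $\re s=-\varepsilon$ past the pole at $s=0$, the functional equation \eqref{auxLfe} and term-by-term expansion of $\widetilde{\mathcal{L}}_\f$, extraction of the cosine from the gamma-ratio integral (the paper does this by completing the contour and invoking Chandrasekharan--Narasimhan's Lemma~1, which packages exactly the Stirling/stationary-phase analysis you flag as the hard step), and for \eqref{MeanValue} the same key idea of bounding the main-term sum via the mean square \eqref{msq} rather than the pointwise $\varrho$-bound, with the same choice $M\asymp Q^{4/3}x^{(1+4\varrho)/3}$. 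The argument is correct and matches the paper's proof in all essentials.
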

\begin{remark}
It is shown in \cite[Proposition 3.2]{MeherMurty2014} that $\mathcal{S}_{\f}^\mathbb{N}(x)\ll x^{2/5+\varepsilon}$, which is superseded by the particular case $\mathcal{A}=\mathbb{N}$ (and $Q=1$) of \eqref{MeanValue} for $\varrho=1/6+\varepsilon$ is admissible. 
\end{remark}

\begin{proof}
Let $d\le x^{1/2}$, $1\le M\le x$ and $T>1$ be  chosen as 
\begin{eqnarray}\label{T}
T^2= q_d^{-2}4\pi^2(M+1/2)x   \gg 1.
\end{eqnarray}
We apply the Perron formula (cf. \cite[Corollary II.2.2.1]{Tenenbaum1995})
to \eqref{calL} with $\kappa:=1+\varepsilon$, $\sigma_a=\alpha=1$ and $B(n)=C_{\varepsilon}n^{\varrho}$ to write
\begin{equation}\label{PerronFormula}
\mathcal{S}_{\f}(x,a/d) 
= \frac1{2\pi {\rm i}} 
\int_{\kappa-{\rm i}T}^{\kappa+ {\rm i}T} \mathcal{L}_\f(s,a/d) \frac{x^s}{s} \d s 
+ O_{\mathfrak{f}, \varepsilon}\bigg(\frac{dx^{1+\varrho}}{T}\bigg).
\end{equation}

We deform the line of integration to the contour ${\mathscr L}$ joining the points
$\kappa - {\rm i}T$,
$-\varepsilon - {\rm i}T$,
$-\varepsilon + {\rm i}T$,
$\kappa + {\rm i}T$. 
Let ${\mathscr L}_{\rm v}:=[-\varepsilon-{\rm i}T, -\varepsilon+{\rm i}T]$.
By Lemma~\ref{convex}, the integrals over the horizontal segments of ${\mathscr L}$ are $\ll x^\vep (xT^{-1}+d^{3/2})$, and the pole of the integrand at $s=0$ gives $\mathcal{L}_\f(0,a/d)\ll d^{3/2+\vep}$. By  the functional equation \eqref{auxLfe}, the integral over ${\mathscr L}_{\rm v}$ equals
\begin{align*}
\frac1{2\pi {\rm i}} \int_{{\mathscr L}_{\rm v}} \mathcal{L}_\f(s,a/d) \frac{x^s}s \d s
& = q_d \ic^{-(\ell+1/2)} \frac1{2\pi {\rm i}} \int_{{\mathscr L}_{\rm v}} 
\frac{ L_\infty(1-s)}{L_\infty(s)}  \widetilde{\mathcal{L}}_\f(1-s,a/d) 
\bigg(\frac{\sqrt{x}}{q_d}\bigg)^{2s} \frac{\d s}{s}
\end{align*}
By  \eqref{auxLs} and \eqref{eq7.3}, we express \eqref{PerronFormula} into
\begin{equation}\label{3.4}
\mathcal{S}_\f(x,a/d)
=  \frac{\sqrt{q_d}}{2\pi}  \sum_{n\ge 1}\frac{\lambda(n; d)\phi_a(n,d)}{n} I_{{\mathscr L}_{\rm v}}\left(\frac{2\pi\sqrt{nx}}{q_d}\right)
+O\bigg(\frac{dx^{1+\varrho}}{T}+d^{3/2} x^\vep\bigg)
\end{equation}
where
$$
I_{{\mathscr L}_{\rm v}}(y)
:= \frac{1}{2\pi {\rm i}} \int_{{\mathscr L}_{\rm v}} 
\frac{\Gamma(1-s+\ell/2-1/4)}{\Gamma(s+\ell/2-1/4)} \cdot \frac{y^{2s}}s \d s.
$$
Next we apply the stationary phase method to bound $I_{{\mathscr L}_{\rm v}}(y)$ for large $y$ 
and give an asymptotic expansion in terms of trigonometric functions for small $y$.

With Stirling's formula, for $\tau>0$, the integrand equals
$$
{\rm e}^{\text{i}\pi (\ell-1)/2} y^{2\sigma} \tau^{-2\sigma} {\rm e}^{2\text{i}\tau\log(\ee y/\tau)}
\big\{1+c_1\tau^{-1} + O\big(\tau^{-2}\big)\big\}
$$ 
for any $|\tau|\ge 1$ and $|\sigma|\le A$, where $c_1$ and $A>0$ denote some suitable constants 
and the  implied $O$-constant is independent of $\tau$ and $y$. 
Set $g(\tau) := 2\tau\log({\ee} y/\tau)$, then $g'(\tau)= 2\log( y/\tau)$.   
With the second mean value theorem for integrals (cf. \cite[Theorem I.0.3]{Tenenbaum1995}), 
we obtain for $ y>T$ and $\sigma =-\vep$, 
\begin{equation}\label{est1}
\int_1^T y^{2\sigma}\tau^{-2\sigma} \text{e}^{\text{i}g(\tau)} \big\{1+c_1\tau^{-1} + O\big(\tau^{-2}\big)\big\} \d \tau
\ll T^{2\vep} y^{2\sigma}\left|\log \frac{ y}T\right|^{-1}+T^{2\vep-1}y^{2\sigma},
\end{equation}
and for $ y<T$ and $\sigma =\frac12+\vep$,
\begin{equation}\label{est2}
\int_T^\infty y^{2\sigma} \tau^{-2\sigma} \text{e}^{\text{i}g(\tau)} \big\{1+c_1\tau^{-1} + O\big(\tau^{-2}\big)\big\} \d \tau
\ll 
 T^{-1-2\vep}y^{2\sigma}\left|\log \frac{ y}T\right|^{-1}+T^{-1-2\vep}y^{2\sigma}.
\end{equation}
For $n>M$, we infer by \eqref{est1} that
\begin{equation*}
\begin{aligned}
I_{{\mathscr L}_{\rm v}}\bigg(\frac{2\pi\sqrt{nx}}{q_d}\bigg)
& \ll_k \bigg(\frac{x}{\sqrt{n}}\bigg)^{2\varepsilon} 
\bigg(\bigg|\log \frac{n}{M+1/2} \bigg|^{-1}+d(Mx)^{-1/2} \bigg).
\end{aligned}
\end{equation*}
By  $\lambda(n; d)\ll n^{\varrho+\vep}$ from Lemma~\ref{lem3.1} and  $|\phi_a(n,d)|\le \sqrt{2}d^{3/2}$, it follows that
\begin{align*}
\sqrt{q_d} \sum_{n>M} \frac{|\lambda(n; d)\phi_a(n,d)|}{n^{1+\varepsilon}} \left|\log \frac{n}{M+1/2}\right|^{-1}
& \ll d^{2} M^\varrho \sum_{M<n<2M} |n-(M+1/2)|^{-1}
\\
& \ll d^{2} M^{\varrho+\vep}.
\end{align*}
Consequently we deduce that
\begin{equation}\label{tail}
\frac{\sqrt{q_d}}{2\pi} \sum_{n>M}\frac{\lambda_{\fh}(n)\phi_a(n,d)}{n} 
I_{{\mathscr L}_{\rm v}}\bigg(\frac{2\pi\sqrt{nx}}{q_d}\bigg) 
\ll x^{\varepsilon} d^2 M^{\varrho} + x^\vep d^2 (Mx)^{-1/2} .
\end{equation}

For $n\le M$, we complete the path ${\mathscr L}_{\rm v}$ to the contour ${\mathscr L}_{\rm v}^*$ 
so as to apply \cite[Lemma~1]{CN1963}, where ${\mathscr L}_{\rm v}^*$ is the positively oriented contour consisting of 
${\mathscr L}_{\rm v}$, ${\mathscr L}_{\rm v}^\pm$ and ${\mathscr L}_{\rm h}^\pm$ with
$$
{\mathscr L}_{\rm v}^\pm 
:= [\tfrac{1}{2}+\varepsilon \pm {\rm i}T, \, \tfrac{1}{2}+\varepsilon \pm {\rm i}\infty),
\qquad
{\mathscr L}_{\rm h}^\pm
:= [-\varepsilon \pm {\rm i}T, \, \tfrac{1}{2} + \varepsilon \pm {\rm i}T].
$$ 
Correspondingly we denote by  $I_{{\mathscr L}_{\rm v}^\pm}$ and  $I_{{\mathscr L}_{\rm h}^\pm}$ the integrals over these segments. 
By \eqref{est2}, the integral over the vertical line segments ${\mathscr L}_{\rm v}^\pm$ is  
\begin{align*}
I_{{\mathscr L}_{\rm v}^\pm}
& \ll x^\varepsilon \bigg(\frac{n}{M}\bigg)^{1/2} \bigg|\log \frac{n}{M+1/2}\bigg|^{-1} ,
\end{align*}
while for the horizontal segments, $I_{{\mathscr L}_{\rm h}^\pm}$ contributes at most $O((n/M)^\vep)$. Thus 
\begin{equation}\label{sumILhpm}
\begin{aligned}
&\frac{\sqrt{q_d}}{2\pi} \sum_{n\le M} \frac{\lambda(n; d)\phi_a(n,d)}{n} 
\left(I_{{\mathscr L}_{\rm v}^\pm}+I_{{\mathscr L}_{\rm h}^\pm}\right)
\\
& \ll  x^\vep d^2M^{\rho -1/2} \sum_{M/2\le n\le M} n^{-1/2}  \bigg|\log \frac{M+1/2}{M+1/2-n}\bigg|^{-1} 
\\
& \ll x^\vep d^2M^{\varrho}. 
\end{aligned}
\end{equation}
Inserting \eqref{sumILhpm} and \eqref{tail} into \eqref{3.4},  we get from our choice of $T$,
\begin{equation}\label{610}
\begin{aligned}
\mathcal{S}_\f(x,a/d) 
& = \frac{\sqrt{q_d}}{2\pi}  \sum_{1\le n\le M}\frac{\lambda(n; d)\phi_a(n,d)}{n} I_{{\mathscr L}_{\rm v}^*}\left(\frac{2\pi\sqrt{nx}}{q_d}\right) 
\\
& \quad 
+ O\big(x^\vep d^{2}(x^{1/2+\varrho}M^{-1/2}+M^\rho)\big).
\end{aligned}
\end{equation}

Now all the poles of the integrand in 
$$
I_{{\mathscr L}_{\rm v}^*}(y)
:= \frac{1}{2\pi {\rm i}} \int_{{\mathscr L}_{\rm v}^*}
\frac{\Gamma(1-s+\ell/2-1/4) \Gamma(s)}{\Gamma(s+\ell/2-1/4) \Gamma(s+1)} 
y^{2s}  \d s.
$$
lie on the right of the contour ${\mathscr L}_{\rm v}^*$. 
After a change of variable $s$ into $1-s$, we have 
\begin{align*} 
I_{{\mathscr L}_{\rm v}^*}(y) 
& = \frac{1}{\pi} I_0\big(y^2\big),
\end{align*}
with 
$$
I_0(y)
:= \frac1{2\pi {\rm i}}\int_{{\mathscr L}_\varepsilon}  
\frac{\Gamma(s+(2\ell-1)/4)\Gamma(1-s)}{\Gamma(1-s+(2\ell-1)/4)\Gamma(2-s)} 
y^{1-s} \d s. 
$$ 
Here ${\mathscr L}_\varepsilon$ consists of the line $s=\dm-\varepsilon+{\rm i}\tau$ with $|\tau|\ge T$,
together with three sides of the rectangle whose vertices are 
$\dm-\varepsilon-{\rm i}T$,
$1+\varepsilon-{\rm i}T$,
$1+\varepsilon-{\rm i}T$
and
$\dm-\varepsilon+{\rm i}T$.
Clearly our $I_0$ is a particular case of $I_\rho$ defined in \cite[Lemma 1]{CN1963},
corresponding to the choice of parameters 
$A=\delta=N=\omega=\alpha_1=1$, 
$\beta_1=\mu=(\ell-2)/4$,
$\rho=m=0$,
$a=-\frac{3}{4}$,
$c_0=\frac{1}{2}$,
$h=2$,
$k_0=-(\ell+1)/2$.
It hence follows that
\begin{equation}\label{3.9}
I_{{\mathscr L}_{\rm v}^*}\bigg(\frac{2\pi\sqrt{nx}}{q_d}\bigg)
= {e_0'} \sqrt{\frac{2\pi}{q_d}}  (nx)^{1/4}
\cos\bigg(4\pi\frac{\sqrt{nx}}{q_d}-\frac{\ell+1}{2}\pi\bigg) 
+ O\big(d^{1/2} (nx)^{-1/4}\big).
\end{equation}
The value of $e_0'$ \cite[Lemma 1]{CN1963} is $1/\sqrt{\pi}$, and the main term in \eqref{VoronoiSf} follows from \eqref{3.9} and \eqref{610}. With a simple checking, the  $O$-term in \eqref{3.9} gives a term that will be absorbed in \eqref{610}.

Finally we set  $M=Q^{4/3}x^{(1+4\rho)/3}$ and note from \eqref{eq7.3} that
\begin{eqnarray*}
\sum_{n\le M} \frac{|\lambda(n; d)\phi_a(n,d)|}{n^{3/4}}
\ll d^{1+\vep} \sum_{n\le M} |\lambda(n; d)|^2n^{-3/4} + d^{1+\vep} \sum_{n\le M} (n,d) n^{-3/4},
\end{eqnarray*}
which is  $\ll x^\vep d  M^{1/4}$ with \eqref{msq}.
\end{proof}

\vskip 8mm

\section{Preparation for the proof of Theorem \ref{thm2}}\label{pthm2}

We consider odd $Q$ only, then $q_d=2d$ and $\lambda(n; d)=\lambda_\fh(n)$ for all $d\mid Q$. 
The idea of proof is the same as in Heath-Brown \& Tsang \cite{HBT94}, 
however,  some new technicality arises because of the new frequencies ($\sqrt{n}/q_d$ rather than $\sqrt{n}$). Consequently, instead of  $\sqrt{1}$, we shall apply their argument to the frequency $\sqrt{n_0}/Q$ 
where $n_0=2^jf_0$ with $j\ge 0$ and $f_0$ squarefree, 
and simultaneously, require the coefficient $\lambda_\fh(n_0) \phi_a(n_0,Q)$ to be non-vanishing.  
We can guarantee the existence of $n_0$ under certain circumstances. 

For convenience, let us recall our notation (specialized to this case $2\nmid d$): 
\begin{equation*}
\mathcal{S}_\f^\mathcal{A}(x) 
=  \sum_{\substack{n\le x\\ n\equiv a ({\rm mod}\,Q)}} \lambda_{\f}(n) 
\quad \mbox{ and }\quad
\mathcal{S}_{\f}(x,a/d) := \sum_{n\le x} \lambda_{\f}(n) R_d(n-a).
\end{equation*} 
where $R_d(m)= \sumstar_{\hskip -1,5mm u ({\rm mod}\, d)} \ee\left(mu/d\right)$ is the Ramanujan sum.
Their associated Dirichlet series are
\begin{equation*}
L_\f(s,a,Q) := \sum_{\substack{n\ge 1\\ n\equiv a ({\rm mod} \, Q)}} \lambda_{\f}(n) n^{-s}
\quad \mbox{ and }\quad
\mathcal{L}_\f(s,a/d):= \sum_{n\ge 1} \lambda_\f(n)R_d(n-a)n^{-s}.
\end{equation*}
Moreover, $L_\f(s,a,Q) = Q^{-1} \sum_{d\mid Q} \mathcal{L}_\f(s,a/d)$ and 
\begin{eqnarray*}
(2d)^{s} L_\infty(s) \mathcal{L}_\f(s,a/d) = \ic^{-(\ell+1/2)} (2d)^{1-s} L_\infty(1-s)  \widetilde{\mathcal{L}}_\f(1-s,a/d) 
\end{eqnarray*}
where 
\begin{eqnarray*}
\widetilde{\mathcal{L}}_\f(s,a/d) 
:=  \sum_{n\ge 1} \lambda_{\fh}(n) {\rm K}(a, n; d) n^{-s} .
\end{eqnarray*}

\begin{lemma}\label{lem7.1}  Under the assumption that $\{\lambda_\f (n)\}_{n\in \N}$ is a real  sequence,   for all $a,d$, the sequences $\{\ic^{-(\ell+1/2)}\lambda_\fh(n){\rm K}(a, n; d)\}_{n\in\N}$ are real.
\end{lemma}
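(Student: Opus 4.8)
The plan is to factor the quantity under consideration as
$$
\ic^{-(\ell+1/2)}\lambda_\fh(n){\rm K}(a,n;d) = \lambda_\fh(n)\cdot\big[\ic^{-(\ell+1/2)}{\rm K}(a,n;d)\big]
$$
and to show that each of the two factors is real. Since $\lambda_\f(n)\in\R$ by hypothesis, the realness of $\lambda_\fh(n)$ should follow from the conjugation symmetry of $\f$ combined with the definition \eqref{hfe} of $\fh$ at the cusp $0$, while the realness of $\ic^{-(\ell+1/2)}{\rm K}(a,n;d)$ should follow from the involution $u\mapsto-u$ in the character sum defining ${\rm K}$ (recall we are in the case $2\nmid d$).

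First I would treat the Salié-type factor. Starting from the expression for ${\rm K}(a,n;d)$ in the case $2\nmid d$,
$$
{\rm K}(a,n;d) = \ic^{\ell+1/2}\vep_d^{-(2\ell+1)}\sumstar_{u\,(\mathrm{mod}\,d)}\Big(\tfrac{u}{d}\Big)\ee\!\Big(-\tfrac{a\overline u+\overline 4\,nu}{d}\Big),
$$
I would pass to the complex conjugate (the Legendre symbol is real, $\overline{\ee(z)}=\ee(-z)$, $\overline{\ic^{\ell+1/2}}=\ic^{-(\ell+1/2)}$ and $\overline{\vep_d}=\vep_d^{-1}$) and then substitute $u\mapsto-u$. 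Using $\overline{-u}\equiv-\overline u$, $\big(\tfrac{-u}{d}\big)=\big(\tfrac{-1}{d}\big)\big(\tfrac{u}{d}\big)$, the standard identity $\vep_d^2=\big(\tfrac{-1}{d}\big)$ and $\big(\tfrac{-1}{d}\big)^2=1$, the character sum returns to itself up to an explicit constant, yielding
$$
\overline{{\rm K}(a,n;d)} = \ic^{-(2\ell+1)}{\rm K}(a,n;d),
$$
valid for all $a,n$ and all odd $d$ (no coprimality is needed). Hence $\overline{\ic^{-(\ell+1/2)}{\rm K}}=\ic^{\ell+1/2}\,\ic^{-(2\ell+1)}{\rm K}=\ic^{-(\ell+1/2)}{\rm K}$, i.e. $\ic^{-(\ell+1/2)}{\rm K}(a,n;d)$ is real. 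As a cross-check when $(a,d)=1$ or $(n,d)=1$, formula \eqref{Kform} displays ${\rm K}$ as $\ic^{\ell+1/2}$ times the real quantity $\vep_d^{-2\ell}d^{1/2}\big(\tfrac{x}{d}\big)$ times $\sum_{y^2\equiv an}\ee(y/d)$, and this last sum is real because the involution $y\mapsto-y$ groups its terms into cosines.

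Next I would establish that $\lambda_\fh(n)\in\R$. The hypothesis $\lambda_\f(n)\in\R$ is equivalent to the symmetry $\overline{\f(-\overline z)}=\f(z)$, i.e. $\overline{\f(w)}=\f(-\overline w)$. Feeding this into \eqref{hfe} and computing $\overline{\fh(-\overline z)}$, the point inside $\f$ transforms as $1/(4\overline z)\mapsto-1/(4z)$ under $w\mapsto-\overline w$, while the weight factor obeys $\overline{(\ic 2\overline z)^{-(\ell+1/2)}}=(-\ic 2z)^{-(\ell+1/2)}$; combining these gives $\overline{\fh(-\overline z)}=\fh(z)$, which forces every Fourier coefficient, hence every $\lambda_\fh(n)$, to be real. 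Multiplying the two real factors then completes the proof for all $a,d$.

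The main obstacle I anticipate is the branch bookkeeping for the half-integral power $(-\ic 2z)^{-(\ell+1/2)}$: one must verify that for $z\in\pk$ both $-\ic 2z$ and $\ic 2\overline z$ lie in the open right half-plane, so that the principal branch satisfies $\overline{w^{-(\ell+1/2)}}=\overline{w}^{-(\ell+1/2)}$ together with $\overline{\ic 2\overline z}=-\ic 2z$. Only after this check does the clean identity $\overline{\fh(-\overline z)}=\fh(z)$ hold and deliver the realness of $\lambda_\fh(n)$; the rest is the elementary symmetrization of the character sum carried out above.
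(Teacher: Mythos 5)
Your proof is correct, but it proceeds by a genuinely different route from the paper's. The paper argues indirectly: since $R_d(n-a)$ and $\lambda_\f(n)$ are real, the Dirichlet series $\mathcal{L}_\f(s,a/d)=\sum_n\lambda_\f(n)R_d(n-a)n^{-s}$ is real on $(1,\infty)$, hence satisfies the reflection symmetry $\overline{\mathcal{L}_\f(\overline{s},a/d)}=\mathcal{L}_\f(s,a/d)$ on all of $\C$ by analytic continuation; feeding this into the functional equation \eqref{auxLfe}, whose factors $(2d)^{s}$ and $L_\infty(s)$ share that symmetry, transfers it to $\ic^{-(\ell+1/2)}\widetilde{\mathcal{L}}_\f(s,a/d)$, and uniqueness of Dirichlet coefficients then yields the realness of the products $\ic^{-(\ell+1/2)}\lambda_\fh(n){\rm K}(a,n;d)$. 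You instead prove the strictly stronger statement that each factor is separately real: the identity $\overline{{\rm K}(a,n;d)}=\ic^{-(2\ell+1)}{\rm K}(a,n;d)$ via $u\mapsto-u$ together with $\vep_d^{2}=\big(\tfrac{-1}{d}\big)$ checks out (and indeed needs no coprimality of $a$ or $n$ with $d$), and the symmetry $\overline{\fh(-\overline z)}=\fh(z)$, deduced from $\overline{\f(-\overline w)}=\f(w)$ and the branch verification that $-\ic 2z$ and $\ic 2\overline z$ lie in the right half-plane, correctly forces $\lambda_\fh(n)\in\R$. What each approach buys: the paper's argument is shorter, avoids all bookkeeping with $\vep_d$, Jacobi symbols and branches of half-integral powers, and works uniformly in $d$; yours yields finer information (the individual realness of $\lambda_\fh(n)$ and of the normalized Sali\'e sum), at the cost of the explicit case computation and the branch check you rightly flag as the delicate point. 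Both are complete proofs of the lemma as stated.
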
 
\begin{proof} Since the Ramanujan sum $R_d(m)$ is real-valued, $\mathcal{L}_\f(s,a/d)$  is real-valued for $s\in (1,\infty)$ under the given assumption. The holomorphicity of $\mathcal{L}_\f(s,a/d)$ implies that $\overline{\mathcal{L}_\f(\overline{s},a/d)}$ is holomorphic. Thus $\overline{\mathcal{L}_\f(\overline{s},a/d)} =\mathcal{L}_\f(s,a/d)$ on $\C$ (as they are equal on $(1,\infty)$). The lemma follows.
\end{proof}

\begin{lemma}\label{lem7.2} 
When the sequence $\{\lambda_\f(n)\}_{n\in \mathcal{A}}$ contains nonzero terms, 
the function  $\mathcal{L}_\f (s,a/d)$  is non-identically zero for all $d\mid Q$. 
\end{lemma}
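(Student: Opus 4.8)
The plan is to produce a single nonzero Dirichlet coefficient of $\mathcal{L}_\f(s,a/d)$ and then appeal to the uniqueness theorem for Dirichlet series. By definition,
$$
\mathcal{L}_\f(s,a/d)=\sum_{n\ge 1}\lambda_\f(n)\,R_d(n-a)\,n^{-s},
$$
and since the functions $n^{-s}$ attached to distinct positive integers $n$ are linearly independent, the coefficient of $n^{-s}$ is exactly $\lambda_\f(n)R_d(n-a)$, with no cancellation between different values of $n$. The whole statement therefore reduces to exhibiting one index $n$ for which this coefficient is nonzero.

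First I would invoke the hypothesis: because $\{\lambda_\f(n)\}_{n\in\mathcal{A}}$ contains a nonzero term, there is some $n_0\in\mathcal{A}$, i.e. $n_0\equiv a\ (\mathrm{mod}\ Q)$, with $\lambda_\f(n_0)\neq 0$. The key observation is that this single element simultaneously handles every divisor $d\mid Q$: from $n_0\equiv a\ (\mathrm{mod}\ Q)$ and $d\mid Q$ we get $d\mid(n_0-a)$, so every summand of the Ramanujan sum $R_d(n_0-a)=\sumstar_{u\,(\mathrm{mod}\,d)}\ee\big(u(n_0-a)/d\big)$ equals $1$. Hence $R_d(n_0-a)=R_d(0)=\varphi(d)>0$, and the $n_0$-th coefficient of $\mathcal{L}_\f(s,a/d)$ equals $\lambda_\f(n_0)\varphi(d)\neq 0$.

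Finally, a Dirichlet series that converges in some half-plane (here for $\re s$ sufficiently large, using $\lambda_\f(n)\ll n^{\varrho+\varepsilon}$ from Lemma~\ref{lem3.1} together with $|R_d(m)|\le\varphi(d)$) vanishes identically only if all its coefficients vanish; the nonzero coefficient $\lambda_\f(n_0)\varphi(d)$ thus forces $\mathcal{L}_\f(s,a/d)\not\equiv 0$, and since $d\mid Q$ was arbitrary, the conclusion holds for every $d\mid Q$. I do not expect a real obstacle here: the only point needing care is the absence of cancellation among the coefficients, which is automatic from the linear independence of the $n^{-s}$. It is worth stressing that this argument is entirely elementary and uses neither the functional equation \eqref{auxLfe} nor the passage to $\widetilde{\mathcal{L}}_\f$ — the diagonal contribution $R_d(0)=\varphi(d)$ does all the work, which is exactly what makes the statement hold uniformly in $d$ rather than merely for a single divisor of $Q$.
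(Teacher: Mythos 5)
Your proof is correct and rests on the same key computation as the paper's: for $n_0\equiv a\pmod Q$ with $\lambda_\f(n_0)\neq 0$ and $d\mid Q$, one has $d\mid(n_0-a)$ and hence $R_d(n_0-a)=\varphi(d)\neq 0$. The only difference is presentational — you read off the nonzero $n_0$-th Dirichlet coefficient of $\mathcal{L}_\f(s,a/d)$ directly, whereas the paper argues by contradiction through the decomposition $L_\f(s,a,Q)=Q^{-1}\sum_{d\mid Q}\mathcal{L}_\f(s,a/d)$ and the resulting impossible identity $Q=Q-\varphi(d_0)$; your direct version is, if anything, cleaner.
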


\begin{proof} 
Suppose not, say, $\mathcal{L}_\f (s,a/d_0)\equiv 0$. Then 
\begin{equation*}
\sum_{\substack{n\ge 1\\ n\equiv a ({\rm mod} \, Q)}} \lambda_{\f}(n) n^{-s} 
= Q^{-1} \sum_{\substack{d\mid Q\\ d\neq d_0}} \mathcal{L}_\f(s,a/d)
= \sum_{n\ge 1} n^{-s} \lambda_\f(n) Q^{-1} \sum_{\substack{d\mid Q\\ d\neq d_0}} R_d(n-a).
\end{equation*}
With  the standard formula for the Ramanujan sum, we infer that 
\begin{eqnarray*}
\delta_{n\equiv a ({\rm mod}\,Q)} \lambda_\f(n) 
= \lambda_\f(n) Q^{-1} \sum_{\substack{d\mid Q\\ d\neq d_0}} 
\sum_{\substack{\delta\mid d\\ (d/\delta)\mid (n-a)}} \mu(\delta) (d/\delta) 
\qquad\mbox{$\forall$ $n\ge 1$}.
\end{eqnarray*}
Take $n\equiv a \,(\bmod\,{Q})$ such that $\lambda_\f(n)\neq 0$. We obtain that
$$
Q-\phi(d_0)
= \sum_{\substack{d\mid Q\\ d\neq d_0}} \phi(d)
= \sum_{\substack{d\mid Q\\ d\neq d_0}} \sum_{\delta\mid d} \mu(\delta) (d/\delta) = Q.
$$
Contradiction arises.
\end{proof}

\begin{proposition}\label{propsign} 
Let $Q\ge 1$ be odd and $0\le a< d$. Suppose $n_0=2^jf_0$ with $f_0$ squarefree  and $j\ge 0$ is an integer such that
\begin{equation}\label{propcond}
\lambda_\fh(n_0) \phi_a(n_0,Q)\neq 0.
\end{equation}
Then there are constants $c_0=c_0(\f,Q,n_0)$ and $x_0=x_0(\f, Q,n_0)$ such that $\mathcal{S}_\f^\mathcal{A}(x)$ attains at least one sign change in the interval $[x,x+c_0\sqrt{x}]$ for all $x\ge x_0$. 
\end{proposition}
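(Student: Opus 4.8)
The plan is to run the resonance argument of Heath-Brown and Tsang \cite{HBT94}, using the truncated Voronoi formula of Theorem~\ref{Voronoi} to isolate the single frequency attached to $n_0$. Recall that for odd $Q$ we have $q_d=2d$ and $\lambda(n;d)=\lambda_\fh(n)$ for all $d\mid Q$, so the cosines appearing there are $\cos(\beta_{n,d}\sqrt t+\gamma)$ with $\beta_{n,d}:=2\pi\sqrt n/d$ and $\gamma:=-\tfrac{\ell+1}2\pi$. Suppose, for contradiction, that $\mathcal{S}_\f^\mathcal{A}$ keeps a constant sign on $I:=[x,x+c_0 x^{1/2}]$; after replacing $\f$ by $-\f$ we may assume $\mathcal{S}_\f^\mathcal{A}(t)\ge 0$ throughout $I$. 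I would then compare the two integrals
\[
A:=\int_I \mathcal{S}_\f^\mathcal{A}(t)\,\dd t,
\qquad
R:=\int_I \mathcal{S}_\f^\mathcal{A}(t)\cos\big(\beta_{n_0,Q}\sqrt t+\gamma\big)\,\dd t.
\]
Because $\mathcal{S}_\f^\mathcal{A}\ge 0$ on $I$ and $|\cos|\le 1$, the elementary constraint $|R|\le\int_I\mathcal{S}_\f^\mathcal{A}=A$ holds, and the whole strategy is to contradict it by proving $|R|>A$.

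The decisive structural input is a frequency-uniqueness statement: for $d\mid Q$ and $n\ge 1$ one has $\beta_{n,d}=\beta_{n_0,Q}$, i.e. $\sqrt n/d=\sqrt{n_0}/Q$, only for $(n,d)=(n_0,Q)$. Indeed the equality forces $nQ^2=n_0d^2$; writing $e=Q/d$ this means $e^2\mid n_0$, and since $n_0=2^jf_0$ with $f_0$ squarefree while $e\mid Q$ is odd, we get $e^2\mid f_0$, hence $e=1$. Inserting the Voronoi expansion into $R$ and using $\cos(\beta_{n_0,Q}\sqrt t+\gamma)^2=\tfrac12\big(1+\cos(2\beta_{n_0,Q}\sqrt t+2\gamma)\big)$, only the term $(n,d)=(n_0,Q)$ produces a stationary (non-oscillating) contribution, namely $\tfrac12(\sqrt2\pi Q)^{-1}\lambda_\fh(n_0)\phi_a(n_0,Q)n_0^{-3/4}\int_I t^{1/4}\,\dd t$. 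By Lemma~\ref{lem7.1} the number $\ic^{-(\ell+1/2)}\lambda_\fh(n_0){\rm K}(a,n_0;Q)$ is real, so $\lambda_\fh(n_0)\phi_a(n_0,Q)$ is a \emph{nonzero real} by \eqref{propcond}; thus this main term is genuinely of size $\asymp_{\f,Q,n_0}x^{1/4}|I|\asymp c_0\,x^{3/4}$, and in particular it grows linearly in $c_0$.

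For every remaining (off-diagonal) term of $R$ and for every term of $A$, the relevant phase $\beta_{n,d}\sqrt t$, respectively $(\beta_{n,d}\pm\beta_{n_0,Q})\sqrt t$, has nonvanishing derivative, so one integration by parts bounds the corresponding integral by $\ll x^{3/4}/\beta_{n,d}$, respectively $\ll x^{3/4}/|\beta_{n,d}-\beta_{n_0,Q}|$, uniformly in the length $|I|$ once $x$ is large. Summing against the coefficients $\lambda_\fh(n)\phi_a(n,d)n^{-3/4}$, estimated via $\lambda_\fh(n)\ll n^\varrho$ (Lemma~\ref{lem3.1}) and $\phi_a(n,d)\ll(n,d)^{1/2}\tau(d)d$ (see \eqref{eq7.3}), shows that both $A$ and the off-diagonal part of $R$ are $\ll_{\f,Q,n_0}x^{3/4}$, with implied constants \emph{independent of $c_0$}. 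The integrated Voronoi error contributes $\ll |I|\,x^\varepsilon Q\,(x^{1/2+\varrho}M^{-1/2}+M^\varrho)$, which is $\ll x^{3/4-\delta}$ for a suitable choice of $M$ (e.g. $M\asymp x^{5/6+\delta'}$ when $\varrho=\tfrac16$, keeping $n_0\le M\le x$). Since the main term of $R$ grows linearly in $c_0$ while $A$, the off-diagonal part, and the error all stay bounded by a fixed multiple of $x^{3/4}$, choosing $c_0=c_0(\f,Q,n_0)$ large and then $x\ge x_0(\f,Q,n_0)$ yields $|R|>A$, contradicting the constant-sign hypothesis; hence $\mathcal{S}_\f^\mathcal{A}$ changes sign on $I$.

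The main obstacle will be the off-diagonal estimate, concretely the near-resonant terms for which $|\beta_{n,d}-\beta_{n_0,Q}|$ is small yet nonzero. Here the qualitative uniqueness above must be upgraded to a quantitative separation: from $nQ^2\neq n_0d^2$ one has $|nQ^2-n_0d^2|\ge 1$, and at the closest approach, where $n$ is the nearest integer to $n_0/e^2$ with $e=Q/d>1$, the fact that $e^2\nmid n_0$ forces $\|n_0/e^2\|\ge e^{-2}$, so $|nQ^2-n_0d^2|\ge d^2$ and $|\beta_{n,d}-\beta_{n_0,Q}|\gg_{Q,n_0}1$ for that extremal term, while the spacing $\asymp Q^2$ controls the rest. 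Making this separation explicit and summable, uniformly in $x$, is the crux of the argument, and it is exactly where the hypotheses that $n_0=2^jf_0$ is squarefree up to a power of $2$ and that $Q$ is odd are used.
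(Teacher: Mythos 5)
Your argument is correct and rests on exactly the two pillars of the paper's own proof: the truncated Voronoi formula of Theorem~\ref{Voronoi}, and the uniqueness observation that $\sqrt{n}/d=\sqrt{n_0}/Q$ with $d\mid Q$ forces $(n,d)=(n_0,Q)$ because $f_0$ is squarefree and $Q/d$ is odd. The implementation, however, is genuinely a different packaging of the Heath--Brown--Tsang idea. The paper substitutes $y=(Q(t+\alpha u))^2$, which makes every phase \emph{linear} in the integration variable $u$, and integrates against the nonnegative Fej\'er-type kernels $k_{\pm}(u)=(1-|u|)\bigl(1\pm\cos(2\pi\alpha\sqrt{n_0}\,u)\bigr)$; the two resonance integrals $J_{\pm}(t_m)$ then take opposite signs at $t_m=(m+\tfrac18)n_0^{-1/2}$, which \emph{directly exhibits} a point where $\mathcal{S}_\f^{\mathcal{A}}$ is positive and one where it is negative. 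You instead work in the original variable with the bare resonator $\cos(\beta_{n_0,Q}\sqrt{t}+\gamma)$ and argue by contradiction through $|R|\le A$. This buys simplicity (no change of variables, no kernel computation) at two costs. First, the bare cosine only yields first-power decay $x^{3/4}/|\beta_{n,d}-\beta_{n_0,Q}|$ in the off-diagonal terms, versus the second-power decay $\min\bigl(1,(\alpha_{n,d}^{-})^{-2}\bigr)$ that the Fej\'er kernel provides in the paper; so your near-resonance analysis has to carry more weight. It does suffice: for each $d$ only $O_{Q,n_0}(1)$ values of $n$ lie near $n_0d^2/Q^2$, your separation $|nQ^2-n_0d^2|\ge 1$ gives $|\beta_{n,d}-\beta_{n_0,Q}|\gg_{Q,n_0}1$ for those, and the remaining sum $\sum_n n^{\varrho-5/4}(n,d)^{1/2}$ converges since $\varrho<\tfrac14$. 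Second, your amplification parameter is the interval length $c_0$ rather than the paper's kernel width $\alpha$, so you must check that everything except the diagonal term is bounded \emph{independently} of $c_0$ or is $o(c_0x^{3/4})$; in particular the integrated Voronoi error is itself proportional to $c_0$, so it must be $o(x^{3/4})$ rather than merely $O(x^{3/4})$ --- your choice of $M$ (any $M\in[x^{1/2+2\varrho+\delta},x]$ with $M^{\varrho}\le x^{1/4-\delta}$) achieves this. With these points made explicit your proof closes, and it yields the same conclusion as the paper's constructive version.
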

\begin{proof}
Let $\alpha$ a parameter determined later and $T$ be any sufficiently large number. Set 
$$
F_{\mathfrak{f}}(t+\alpha u)
:= \pi \sqrt{Q} \frac{S_{\f}^\mathcal{A}((Q(t+\alpha u))^2)}{\sqrt{t+\alpha u}}\qquad \mbox{ ($t\in [T,2T]$, $u\in [-1,1]$)}.  
$$
By Theorem~\ref{Voronoi} with $M=(QT)^2$, we deduce that
\begin{equation}\label{F}
\begin{aligned}
F_{\mathfrak{f}}(t+\alpha u) 
& = \sum_{d\mid Q} \sum_{n\le (QT)^2}  \frac{\lambda_{\fh}(n)\phi_a(n,d) }{n^{3/4}} 
\cos\bigg(\pi (t+\alpha u)\frac{Q\sqrt{n}}d -\frac{\ell+1}{2}\pi\bigg) \nonumber\\
&\quad + O\big( Q (QT)^{2\varrho-1/2+\vep }\big).
\end{aligned}
\end{equation}

Let  $\tau =1$ or $-1$, and define 
$$
k_\tau(u)
: = (1-|u|)(1+\tau \cos(2\pi\alpha\sqrt{n_0} u)).
$$ 
Then as in the proof of \cite[Lemma 3.2]{LW2009},  for any $n\in \N$ and $t\in \R$, the integral
$$
r_n
=r_n(\alpha, \tau, t)
:=\int_{-1}^1 k_\tau(u) \cos\bigg(2\pi(t+\alpha u)\frac{Q\sqrt{n}}d -\frac{\ell+1}{2}\pi\bigg)\d u
$$
satisfies
\begin{equation}\label{3.11}
\begin{aligned}
r_n
& = \delta_{Q\sqrt{n}= d\sqrt{n_0}}\cdot \frac{\tau}{2}\cos\bigg(2\pi t\sqrt{n_0}-\frac{\ell+1}{2}\pi\bigg) \\
& \quad + O\bigg(\min\bigg(1,\frac1{\alpha^2n}\bigg)
+\delta_{Q\sqrt{n}\neq d\sqrt{n_0}}\min\bigg(1,\frac1{(\alpha_{n,d}^{-})^2}\bigg)\bigg),
\end{aligned}
\end{equation}
where $\alpha_{n,d}^-=\alpha |Q\sqrt{n}-d\sqrt{n_0}|/d$, $\delta_*=1$ if $*$ holds, or $0$ otherwise. The $O$-constant is absolute.

Observe that $Q\sqrt{n}=d\sqrt{n_0}$ if and only if $2^jf_0=(Q/d)^2n$ 
which is equivalent to $n=2^jf_0=n_0$ and $d=Q$ since $f_0$ is squarefree and $Q/d$ is odd. 
Following from \eqref{F} and \eqref{3.11},  the integral 
$$
J_\tau (t)
= \int_{-1}^1 F_{\mathfrak{f}}(t+\alpha u) k_\tau(u)\d u
$$
can be written as
\begin{equation}\label{J}
J_\tau (t) 
= \frac{\tau}2  \frac{\lambda_{\fh}(n_0)\phi_a(n_0,Q) }{n_0^{3/4}}\cos\bigg(2\pi t\sqrt{n_0}-\frac{\ell+1}{2}\pi\bigg)
+ {\rm E} + O\big(   Q (QT)^{2\varrho-1/2+\vep }\big)
\end{equation}
where 
\begin{eqnarray*}
{\rm E} \ll \frac1{\alpha^2}
\sum_{d\mid Q} \sum_{n\le (QT)^2}  \frac{|\lambda_{\fh}(n)\phi_a(n,d)| }{n^{7/4}} 
+ \sum_{d\mid Q} \frac{d^2}{\alpha^2} 
\sum_{\substack{n\le (QT)^2\\ Q\sqrt{n}\neq d\sqrt{n_0}}}  
\frac{|\lambda_{\fh}(n)\phi_a(n,d)| }{n^{3/4}|Q\sqrt{n}-d\sqrt{n_0}|^2}\cdot
\end{eqnarray*}
Using the bounds $\phi_a(n,d)\ll d^{3/2}$ and $\lambda_\fh(n)\ll n^{\varrho}$, a little calculation gives
$$
{\rm E}\ll Q^{3}n_0^{\varrho+1/4}\alpha^{-2}.
$$

Let $A_0:= |\lambda_{\fh}(n_0)\phi_a(n_0,Q) |n_0^{-3/4}$, which is $>0$. Fix a sufficiently large $\alpha=\alpha(\f,n_0,Q)$, so that $E$ is $<\frac1{8} A_0$, and then a sufficiently large $T_0=T_0(\f, n_0,Q, \alpha)$ such that the $O$-term $O\big(   Q (QT)^{2\varrho-1/2+\vep }\big)$ is $\le \frac18 A_0$ for all $T\ge T_0$. Now observe that for any $m\in\N$, the absolute value of the cosine  factor is $1/\sqrt{2}$ if $t=t_m$ where 
$$
t_m:= (m+\tfrac18)n_0^{-1/2}.
$$
This implies $|J_\tau(t_m)|> \tfrac{1}{4}(\sqrt{2}-1)A_0>0$ whenever $t_m>T_0+\alpha$. 
Since $J_\pm(t_m)$ are of opposite signs and the kernel function $k_\tau$ is nonnegative,  
there is  a pair of $t_m^\pm\in [t_m-\alpha, t_m+\alpha]$ for which $\pm F_\f (t_m^\pm)>0$. 
Equivalently, $\mathcal{S}_\f^\mathcal{A}(y)$ attains a sign change in every interval of the form 
$[(Q(t_m-\alpha))^2, (Q(t_m+\alpha))^2]$ whose length is $\ll \alpha (Q^2 t_m)\ll_{\f,Q,n_0} \sqrt{x}$ 
when $x=(Qt_m)^2$. Our result follows readily. 
\end{proof}

\vskip 8mm

\section{Proof of Theorem \ref{thm2}}\label{Sthm2}

In view of Proposition~\ref{propsign}, the main task is to study the condition $\lambda_\fh(n_0) \phi_a(n_0,Q)$. Recall $\phi_a(n,Q)= \sqrt{2Q} \ic^{-(\ell+1/2)}{\rm K}(a, n; Q)$ by \eqref{eq7.3}. Clearly, $\phi_a(n,1)= \sqrt{2}$. In general, we have by Lemma~\ref{lem9.1} (2),
\begin{equation}\label{eqphi}
\phi_a(n,Q)= \sqrt{2Q} \, \vep_Q^{-(2\ell+1)} \prod_{p^\alpha\|Q} S(n\overline{4 Q_p}, a\overline{Q_p}; p^\alpha)
\end{equation}
where $S(m, n; c)$ is defined as in \eqref{def:Smnc}, 
$Q_p= Q/p^\alpha$ and $\overline{x}x \equiv 1\,(\bmod\,{p^\alpha})$  
for each term inside the product, $\forall$ $p^\alpha\,\|\,Q$.  
\begin{itemize}[leftmargin=5mm]
\renewcommand\labelitemi{$\spadesuit$}
\item 
Case 1. $Q=1$.  It suffices to find a squarefree $t$ and a  $j\ge 0$ such that $\lambda_\fh(2^jt)\neq 0$. By Lemma~\ref{lem7.2}, $\mathcal{L}_\f (s,1)$ and thus $\widetilde{\mathcal{L}}_\f (s,1)= \sum_{n\ge 1} \lambda_\fh(n)n^{-s}$ are not identical to the zero function. Thus $\lambda_\fh(n)\neq 0$ for some $n\in\N$. Write $n=2^jtm^2$ where $t$ is squarefree and $m$ is odd, $\lambda_\fh(2^jt)\neq 0$ from \eqref{mul}. 

\item 
Case 2.  
$a=0$ and $p^\alpha\,\|\,Q$ implies $\alpha$ being odd.  By Lemma~\ref{lem9.1} (2)-(3) and \eqref{eqphi}, $\phi_0(n,Q)=0$ if $(n,Q)>1$. Repeating the argument in Case 1, we get $\lambda_\fh(n) \phi_0(n,Q)\neq 0$ for some $n\in\N$. This $n$ has to be coprime with $Q$. Write $n=2^jtm^2$ with squarefree $t$ and odd $m$, then $\lambda_\fh(2^jt)\neq 0$ (from $\lambda_\fh(2^jtm^2)\neq 0$) and $\phi_0(2^jt,Q)\neq 0$ because
$$
S(hk, 0; Q)= \left(\frac{h}Q\right) S(k, 0; Q)
$$
 if $(h,Q)=1$, from the definition of the Sali\'e sum.

\item
Case 3. $(a, Q)=1$ and $p^2\mid Q$, $\forall$ $p|Q$. The argument is similar to the previous cases -- firstly finding $n=2^jtm^2$, with squarefree $t$ and odd $m$, for which $\lambda_\fh(n) \phi_0(n,Q)\neq 0$. But now we need  \eqref{Kform} to analyze the Sali\'e sum, which gives
$$
\phi_a(2^jtm^2,Q)= \sqrt{2}Q \vep_Q^{-2\ell}  \bigg(\frac{a}Q\bigg) c_{a2^jt} (m, Q)
$$
where
\begin{equation}\label{cbm}
c_b(m, d) 
= \sum_{\substack{y \,({\rm mod}\, d)\\ y^2\equiv bm^2 ({\rm mod}\, d)}} \ee\bigg(\frac{y}{d}\bigg).
\end{equation}
 
As in \eqref{eqphi}, we have the factorization
$$
c_{a2^jt} (m,Q) = \prod_{p^\alpha\|Q} c_{\overline{Q_p}a2^jt}(m, p^\alpha)
$$
and the lemma below assures $(m,Q)=1$ and $\phi_a(2^jt,Q)\neq 0$ when $\phi_a(2^jtm^2,Q)\neq 0$. Hence this case is also complete. 
\end{itemize}

\begin{lemma}\label{lem8.1} 
Let $b\in\Z$, $p$ an odd prime and $\alpha\ge 2$. Define $c_b(m,p^\alpha)$ as in \eqref{cbm}. 
Then 
\begin{itemize}
\item[{\rm (i)}]
$c_b(m,p^\alpha)=0$ if $p\mid m$, 
and 
\item[{\rm (ii)}] 
$c_b(1,p^\alpha)\neq 0$ if $c_b(m,p^\alpha)\neq 0$ with $p\nmid m$.
\end{itemize}
\end{lemma}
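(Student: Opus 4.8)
The plan is to analyze the sum $c_b(m,p^\alpha)=\sum_{y^2\equiv bm^2\,(\bmod\,p^\alpha)} \ee(y/p^\alpha)$ by understanding the structure of the solution set $\{y : y^2\equiv bm^2\,(\bmod\,p^\alpha)\}$ and then summing the additive characters over it. The two parts have opposite flavors: part (i) is a vanishing statement that I expect to follow from a clean symmetry/translation argument, while part (ii) is a nonvanishing statement that will require more care.

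For part (i), suppose $p\mid m$, say $p^\beta\,\|\,m$ with $\beta\ge 1$. Then $bm^2\equiv 0\pmod{p^{2\beta}}$, so every solution $y$ of $y^2\equiv bm^2\,(\bmod\,p^\alpha)$ must be divisible by $p^{\lceil \min(2\beta,\alpha)/2\rceil}$; in particular each such $y$ is divisible by a fixed positive power $p^\gamma$ of $p$ with $\gamma\ge 1$ (using $\alpha\ge 2$). First I would show that the solution set is a union of full cosets of a subgroup $p^{\alpha-1}\Z/p^\alpha\Z$ (or more precisely, is invariant under $y\mapsto y+p^{\alpha-1}$), because shifting $y$ by $p^{\alpha-1}$ changes $y^2$ by $2yp^{\alpha-1}+p^{2\alpha-2}\equiv 2yp^{\alpha-1}\equiv 0\pmod{p^\alpha}$ when $p\mid y$. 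Since all solutions satisfy $p\mid y$, the solution set is stable under this shift, and the inner sum of $\ee(y/p^\alpha)$ over any such coset is $\ee(y_0/p^\alpha)\sum_{k\,(\bmod\,p)}\ee(k/p)=0$. This forces $c_b(m,p^\alpha)=0$, giving (i).

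For part (ii), the hypothesis is $p\nmid m$ and $c_b(m,p^\alpha)\neq 0$. Since $(m,p)=1$, the map $y\mapsto ym^{-1}$ (with $m^{-1}$ the inverse mod $p^\alpha$) bijects the solutions of $y^2\equiv bm^2$ onto the solutions of $z^2\equiv b$, so I would pass to the substitution $y\equiv mz\,(\bmod\,p^\alpha)$ and write $c_b(m,p^\alpha)=\sum_{z^2\equiv b\,(\bmod\,p^\alpha)}\ee(mz/p^\alpha)$, which is a twist of $c_b(1,p^\alpha)=\sum_{z^2\equiv b}\ee(z/p^\alpha)$. The cleanest route is to argue contrapositively on the structure of the solution set: when $b\not\equiv 0$, the equation $z^2\equiv b\,(\bmod\,p^\alpha)$ either has no solutions (in which case both $c_b(m,p^\alpha)$ and $c_b(1,p^\alpha)$ are empty sums equal to $0$, so the hypothesis $c_b(m,p^\alpha)\neq 0$ cannot hold) or has exactly two solutions $\pm z_0$ (for $p\nmid b$, by Hensel lifting), giving $c_b(m,p^\alpha)=2\cos(2\pi mz_0/p^\alpha)$ and $c_b(1,p^\alpha)=2\cos(2\pi z_0/p^\alpha)$; the remaining case $p\mid b$ with $b\neq 0$ mod $p^\alpha$ I would reduce to the same pattern after extracting the power of $p$. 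The key point is that $c_b(m,p^\alpha)\neq 0$ forces $z_0\not\equiv 0$ and $mz_0/p^\alpha\notin\tfrac14+\tfrac12\Z$, and I must deduce the analogous nonvanishing for the untwisted sum $c_b(1,p^\alpha)$.

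The main obstacle will be precisely this last deduction: ruling out the degenerate possibility that $c_b(1,p^\alpha)=0$ (i.e.\ $z_0/p^\alpha\equiv \tfrac14$) while $c_b(m,p^\alpha)\neq 0$. Here I would use that $p$ is odd, so $p^\alpha$ is odd and the cosine $2\cos(2\pi z_0/p^\alpha)$ vanishes only if $4z_0\equiv p^\alpha\,(\bmod\,2p^\alpha)$, an impossibility modulo the odd number structure — concretely, $\cos(2\pi z_0/p^\alpha)=0$ requires $2z_0/p^\alpha\in\tfrac12+\Z$, i.e.\ $4z_0\equiv p^\alpha\pmod{2p^\alpha}$, which cannot happen for odd $p^\alpha$ unless one tracks parity carefully. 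Thus $c_b(1,p^\alpha)=2\cos(2\pi z_0/p^\alpha)\neq 0$ automatically once a solution $z_0\neq 0$ exists, and the existence of a solution is guaranteed by $c_b(m,p^\alpha)\neq 0$. I would verify the oddness bookkeeping explicitly, since that parity constraint is what ultimately powers the nonvanishing and is the one place where the hypothesis ``$p$ odd'' is indispensable.
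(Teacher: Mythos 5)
Your argument is correct, and for part (i) it takes a genuinely different (and cleaner) route than the paper. The paper proves (i) by writing $m=p^\beta m'$ and splitting into three cases according to whether $\alpha\le 2\beta$ (with $\alpha$ even or odd) or $\alpha>2\beta$, in each case parametrizing the solutions explicitly and exhibiting a vanishing inner geometric sum. Your observation that every solution satisfies $p\mid y$ (reduce the congruence mod $p^2$, using $\alpha\ge 2$) and that the solution set is then stable under $y\mapsto y+p^{\alpha-1}$ (using $p$ odd to pass from $p\mid 2y$ to $p\mid y$) collapses all three cases into one coset-sum computation; it buys brevity and makes visible exactly where the hypotheses $p$ odd and $\alpha\ge 2$ enter. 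For part (ii) you and the paper follow essentially the same path: reduce to $p\nmid b$, invoke Hensel to get exactly two roots $\pm z_0$ with $p\nmid z_0$, and note that $2\cos(2\pi z_0/p^\alpha)=0$ would force $4z_0=(2r+1)p^\alpha$, hence $p^\alpha\mid z_0$, a contradiction. Two points should be tightened. First, your reduction of the case $p\mid b$, $b\not\equiv 0\ (\mathrm{mod}\ p^\alpha)$ is only sketched; the slick way (which is the paper's) is to note that if $p^2\mid b$ then $c_b(m,p^\alpha)=c_{b/p^2}(mp,p^\alpha)=0$ by part (i), while $p\,\|\,b$ admits no solutions at all since $v_p(y^2)$ is even --- and the same identity disposes of $b\equiv 0\ (\mathrm{mod}\ p^\alpha)$, a sub-case you do not mention. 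Second, the nonvanishing of the cosine is not really a parity phenomenon: the point is that $\gcd(4,p^\alpha)=1$, so $4z_0=(2r+1)p^\alpha$ forces $p^\alpha\mid z_0$, contradicting $p\nmid z_0$; your computation arrives at the right equation but attributes the contradiction to the wrong feature of odd $p^\alpha$.
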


\begin{proof} 
(i) 
Write $m=p^\beta m'$ where $p\nmid m'$. 
\begin{itemize}[leftmargin=10mm]
\item 
$\alpha=2\gamma \le 2\beta$. Then 
\begin{eqnarray*}
c_b(m,p^\alpha)
=  \sum_{y^2\equiv 0 ({\rm mod}\, p^\alpha)} \ee\bigg(\frac{y}{p^\alpha}\bigg)
=  \sum_{l ({\rm mod}\,p^\gamma)} \ee\bigg(\frac{l}{p^\gamma}\bigg) =0.
\end{eqnarray*}
\item 
$\alpha=2\gamma+1 \le 2\beta$. Then $y$ is of the form $y=lp^{\gamma+1}$, and as $\gamma\ge 1$,
\begin{eqnarray*}
c_b(m,p^\alpha)
=  \sum_{y^2\equiv 0 ({\rm mod}\, p^\alpha)} \ee\bigg(\frac{y}{p^\alpha}\bigg)
=  \sum_{l ({\rm mod}\,p^\gamma)} \ee\bigg(\frac{l}{p^\gamma}\bigg) =0.
\end{eqnarray*}
\item 
$\alpha>2\beta\ge 2$. Then $y=lp^\beta$ and thus
\begin{align*}
c_b(m,p^\alpha)
& =   
\sum_{l^2\equiv bm'^2 ({\rm mod}\, p^{\alpha-2\beta})} 
\sum_{y \equiv p^\beta l ({\rm mod}\, p^\alpha)} \ee\bigg(\frac{y}{p^\alpha}\bigg)
\\
& =   
\sum_{l^2\equiv bm'^2 ({\rm mod}\, p^{\alpha-2\beta})} 
\sum_{t ({\rm mod}\, p^\beta)} \ee\bigg(\frac{l +tp^{\alpha-2\beta}}{p^{\alpha-\beta}}\bigg)
\\
& = \sum_{l^2\equiv bm'^2 ({\rm mod}\, p^{\alpha-2\beta})} \ee\bigg(\frac{l}{p^{\alpha-\beta}}\bigg)
\sum_{t ({\rm mod}\, p^\beta)} \ee\bigg(\frac{t}{p^{\beta}}\bigg)
\\\noalign{\vskip 1mm}
& = 0.
\end{align*}
\end{itemize}

(ii) 
Suppose $c_b(m,p^\alpha)\neq 0$ where $(m,p)=1$. 
We may assume $p^2\nmid b$, for otherwise, $c_b(m,p^\alpha)= c_{b/p^2}(mp, p^\alpha)=0$ by (i). 
Also $p\,\|\,b$ cannot happen because,  
when $\alpha\ge 2$, $p^2\mid b$ if $p\mid b$ and $y^2\equiv b m^2 (\bmod\,{p^\alpha})$ has solutions. 
Thus $p\nmid b$. 

Now $c_b(m,p^\alpha)\neq 0$ implies the congruence  $y^2\equiv b m^2\,(\bmod\,{p^\alpha})$ is soluble, 
and with $(m,p)=1$, $y^2\equiv b\,(\bmod\,{p^\alpha})$ has two solutions, say, $\pm y_0$ and $p\nmid y_0$.  
We see that 
$$
\sum_{y^2\equiv b ({\rm mod}\, p^\alpha)} \ee\bigg(\frac{y}{p^\alpha}\bigg) 
= 2\cos\bigg(2\pi \frac{y_0}{p^\alpha}\bigg)
\neq 0
$$
because otherwise, $y_0/p^\alpha = (2r+1)/4$ for some $r\in \Z$ or equivalently, $4y_0 = (2r+1)p^\alpha$ which contradicts to $p\nmid y_0$. 
\end{proof}

\vskip 8mm

\section{Appendix}

Let us denote, as in \cite[Section 3]{Iwaniec1987},  the Kloosterman-Sali\'e sum by
\begin{eqnarray*}
K_{2\ell+1}(m,n; c) 
:= \sum_{d\,({\rm mod}\,c)} \vep_d^{-(2\ell+1)} \bigg(\frac{c}d\bigg) \ee\bigg(\frac{m d+n\overline{d}}c\bigg)
\end{eqnarray*}
and 
\begin{equation}\label{def:Smnc}
S(m, n; c) := \sum_{x\, ({\rm mod}\, c)} \bigg(\frac{x}c\bigg) \ee\bigg(\frac{m x+n\overline{x}}c\bigg),
\end{equation}
where $c\in \N$ and $m,n\in\Z$. Then we have the following estimate,
\begin{align}\label{eq:9.1}
|K_{2\ell+1}(n, m; d)| 
\qquad 
\mbox{and} 
\qquad 
|S(m, n; d)| \le d^{1/2}\tau(d)(d, n, m)^{1/2}
\end{align} 
where $\tau(n)$ is the divisor function. This follows from the well-known Weil's bound for Kloosterman sums and the following lemma.

\begin{lemma}\label{lem9.1}
We have the following results:
\begin{enumerate}
\item[$\mathrm{(a)}$] 
Let $c=qr$ with $r\equiv 0\,(\bmod\,{4})$ and $(q,r)=1$. Then 
$$
K_{2\ell+1}(m,n; c)= K_{2\ell+2-q}(m\overline{q}, n\overline{q}; r) S(m\overline{r}, n\overline{r}; q)
$$
where $q\overline{q}\equiv 1\,(\bmod\,{r})$ and $r\overline{r}\equiv 1\,(\bmod\,{q})$.
\item[$\mathrm{(b)}$] 
Let $q$ be odd, $q=uv$ with $(u,v)=1$. Then 
$$
S(m, n; q) = S(m\overline{u}, n\overline{u}; v) S(m\overline{v}, n\overline{v}; u)
$$
where $u\overline{u}\equiv 1\,(\bmod\,{v})$ and $v\overline{v}\equiv 1\,(\bmod\,{u})$.
\item[$\mathrm{(c)}$]  
For an odd prime $p$ and odd $\alpha$, if $p\mid m$, then $S(m, 0; p^\alpha)=0$.
\item[$\mathrm{(d)}$]  
If $(c, 2)=1$, then $|S(m, n; c)|\le (m, n, c)^{1/2}c^{1/2} \tau(c)$. 
\item[$\mathrm{(e)}$] 
Let $4|r|2^\infty$. Then $|K_{2\ell+1} (m, n; r)|\le (m, n, r)^{1/2}r^{1/2} \tau(r)$.
\end{enumerate}
\end{lemma}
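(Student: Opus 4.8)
The plan is to establish the two multiplicativity relations (a) and (b) first, by the Chinese Remainder Theorem, then to deduce the Weil-type bounds (d) and (e) by reducing to prime-power moduli, and finally to prove the vanishing (c) by a Gauss-sum computation; the estimate \eqref{eq:9.1} then drops out by splitting the modulus into its odd part and its $2$-part and feeding (a), (b), (d), (e) together. Parts (b) and (a) are the structural heart, and I expect (a) to be the main obstacle.

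For (b), with $q=uv$ and $(u,v)=1$ I would parametrise $x\,(\bmod\,q)$ by CRT as $x\equiv z\pmod{u}$, $x\equiv y\pmod{v}$, and use two elementary facts: the Jacobi symbol is multiplicative in its lower entry, $\left(\frac{x}{q}\right)=\left(\frac{z}{u}\right)\left(\frac{y}{v}\right)$, and $\tfrac1{uv}\equiv \tfrac{\overline v}{u}+\tfrac{\overline u}{v}\pmod 1$ splits the additive character; reading off the two resulting sums gives the factorisation with no sign to track. Part (a) is genuinely delicate. Writing $c=qr$ with $q$ odd and $4\mid r$, I would again use CRT, $d\equiv a\pmod q$, $d\equiv b\pmod r$; the additive character splits exactly as in (b) and matches the exponentials of $S(m\overline r,n\overline r;q)$ and $K_{\ast}(m\overline q,n\overline q;r)$. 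All the work is in the weight $\vep_d^{-(2\ell+1)}\left(\frac{q}{d}\right)\left(\frac{r}{d}\right)$: since $4\mid r$ forces $d\equiv b\pmod 4$, one has $\vep_d=\vep_b$ and $\left(\frac{-1}{d}\right)=\left(\frac{-1}{b}\right)=\vep_b^2$; writing $r=2^v r'$ one checks $\left(\frac{r}{d}\right)=\left(\frac{r}{b}\right)$ (the $2$-part is unaffected because $v\ge 2$, the odd part by reciprocity using $d\equiv b\pmod 4$), while Jacobi reciprocity gives $\left(\frac{q}{d}\right)=\left(\frac{d}{q}\right)\vep_b^{q-1}=\left(\frac{a}{q}\right)\vep_b^{q-1}$. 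Collecting these, $\vep_d^{-(2\ell+1)}\left(\frac{q}{d}\right)\left(\frac{r}{d}\right)=\left(\frac{a}{q}\right)\left(\frac{r}{b}\right)\vep_b^{-(2\ell+2-q)}$, which is exactly the weight of $S(\cdot\,;q)$ times that of $K_{2\ell+2-q}(\cdot\,;r)$. This bookkeeping is precisely where the index shift $2\ell+1\mapsto 2\ell+2-q$ is produced, and is the step I expect to require the most care.

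With (b) available, the bound (d) reduces by multiplicativity to $c=p^\alpha$. For an odd prime power a Salié sum is, after completing the square, an explicit Gauss sum times $\sum_{y^2\equiv mn\,(\bmod\,p^\alpha)}\ee(y/p^\alpha)$ as in \eqref{Kform}, whose inner sum has at most two terms; this gives $|S(m,n;p^\alpha)|\le 2\,p^{\alpha/2}(m,n,p^\alpha)^{1/2}$, and multiplying over prime factors, with $\tau(c)=\prod_{p^\alpha\|c}(\alpha+1)\ge 2^{\omega(c)}$, yields \eqref{eq:9.1} for $S$. For (e) the modulus is $r=2^v$ with $v\ge 2$; the same explicit evaluation of a Salié-type sum to a prime power gives $|K_{2\ell+1}(m,n;2^v)|\le (m,n,2^v)^{1/2}2^{v/2}\tau(2^v)$. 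Here the $\vep_d$-power is only a unimodular constant, so the parity of the subscript is irrelevant to the bound; this is exactly what lets the same estimate apply to the $K_{2\ell+2-q}$ produced by (a). The general $K$-case of \eqref{eq:9.1} then follows by writing $d=qr$ with $q$ odd and $r$ the $2$-part $\ge 4$, applying (a), and bounding the odd factor by (d) and the $2$-factor by (e).

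For (c) I would extract the exact power of $p$ in $m$: writing $m=p^\gamma m'$ with $(m',p)=1$, and using that $\alpha$ odd collapses $\left(\frac{x}{p^\alpha}\right)$ to $\left(\frac{x}{p}\right)$, the sum $S(m,0;p^\alpha)=\sum_{x}\left(\frac{x}{p}\right)\ee\!\left(m'x/p^{\alpha-\gamma}\right)$ can be evaluated by summing along the fibres of reduction modulo $p^{\alpha-\gamma}$; it collapses to a multiple of a complete additive character sum $\sum_{s\,(\bmod\,p)}\ee(cs/p)$ with $(c,p)=1$, which vanishes by orthogonality of additive characters, the oddness of $\alpha$ being what prevents the quadratic character from obstructing the cancellation. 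This completes all five parts and hence the lemma.
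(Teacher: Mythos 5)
Your overall architecture matches the paper's: the multiplicativity relations (a) and (b) are combined with the prime-power bounds (d) and (e) to give \eqref{eq:9.1}. For (a) and (b) the paper simply cites Iwaniec (Invent.\ Math.\ 87 (1987), Lemmas 2 and 3, p.~390), whereas you sketch the CRT bookkeeping that produces the index shift $2\ell+2-q$; that sketch is consistent with how those lemmas are actually proved. Your (c) is essentially the paper's argument: oddness of $\alpha$ collapses $\big(\frac{x}{p^\alpha}\big)$ to $\big(\frac{x}{p}\big)$, the substitution $x=lp+v$ factors the sum, and one factor is a vanishing character sum.

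The genuine gap is in (d). Your prime-power bound rests on the explicit evaluation \eqref{Kform}, but that evaluation (Iwaniec, \emph{Topics}, Lemma 4.9) requires at least one of the two arguments to be coprime to the modulus; in that situation $(m,n,p^\alpha)=1$ and the gcd factor in the bound is vacuous, so your "at most two terms" count proves nothing beyond the nondegenerate case. The only case in which the factor $(m,n,c)^{1/2}$ is actually needed --- and the case to which the paper's entire proof of (d) is devoted --- is $p\mid(m,n)$: there completing the square is unavailable (neither argument is invertible) and the solution set of $y^2\equiv mn\pmod{p^\alpha}$ need not have bounded size. The paper handles it by reducing to $S(p^am,\,p^{a+b}n;\,p^{a+t})$ with $p\nmid mn$, forming $F(x)=\sum_d\big(\frac{d}{p}\big)\ee\big((x^2d+p^bmn\overline d)/p^t\big)$, computing $\widehat F(y)=g(1,p^t)\,G_t(4mnp^b-y^2)$, and splitting into $t$ odd (where $G_t$ is a Ramanujan-type sum detecting $y^2\equiv 4mnp^b$) and $t$ even (where the sum degenerates to a classical Kloosterman sum). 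None of this appears in your proposal, so (d) --- and hence \eqref{eq:9.1} --- is not established in the degenerate case. A similar caveat applies to (e): the evaluation at moduli $2^v$ with the weight $\vep_d^{-(2\ell+1)}\big(\frac{c}{d}\big)$ is not "the same" computation as for odd moduli (2 is not invertible, and quadratic residues behave differently mod $2^v$); the paper does not prove it either, but cites DeDeo and Cochrane--Zheng rather than asserting it follows from the odd case.
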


\begin{proof} 
(a) 
See \cite[p. 390, Lemma 2]{Iwaniec1987}.

(b)
See \cite[p. 390, Lemma 3]{Iwaniec1987}.  

(c)
By definition, for odd $\alpha$, we have
$$
S(m, 0; p^\alpha) 
= \sum_{x \,({\rm mod}\, p^\alpha)} \left(\frac{x}p\right)\ee\bigg(\frac{mx}{p^\alpha}\bigg).
$$
When $\alpha=1$, $S(m, 0; p^\alpha) =\sum_{x \,({\rm mod}\, p^\alpha)} \big(\frac{x}p\big)=0$ as $p\mid m$. 
Suppose $\alpha\ge 3$. Putting $x=lp +v$, we get
\begin{eqnarray*}
\sum_{l\,({\rm mod}\,p^{\alpha-1})} \ee\bigg(\frac{ml}{p^{\alpha-1}}\bigg) 
\sum_{v\,({\rm mod}\,p)} \bigg(\frac{v}p\bigg)\ee\bigg(\frac{mv}{p}\bigg)= 0.
\end{eqnarray*}

(d)
Iwaniec \cite[Section 4.6]{Iwaniec1997} handled the case $(c,2n)=1$, and thus $(c,2m)=1$ too by symmetry. Together with (b), it suffice to deal with $p\mid (m,n)$ and $c$ is a power of $p$. 

Consider $S:=S(p^a m, p^{a+b} n; p^{a+t})$ where $b\ge 0$, $p\nmid mn$, $a, t\ge 1$ and $a+t$  is odd. (The case that $a+t$ is even is done with the classical Kloosterman sum.) Clearly, 
$$
S =  \sum_{d\,({\rm mod}\, p^{a+t})} 
\bigg(\frac{d}p\bigg) \ee\bigg(\frac{m d+p^b n\overline{d}}{p^t}\bigg) 
= \left(\frac{m}p\right) 
\sum_{d\,({\rm mod}\, p^{a+t})} \bigg(\frac{d}p\bigg) \ee\bigg(\frac{d+p^b mn\overline{d}}{p^t}\bigg).
$$
Mimicking Iwaniec's proof in \cite[p. 67]{Iwaniec1987} (in fact attributed to Sarnak), we consider
$$
F(x) = \sum_{d\,({\rm mod}\, p^{a+t})} \bigg(\frac{d}p\bigg) \ee\bigg(\frac{x^2d+p^b mn\overline{d}}{p^t}\bigg).
$$
and its Fourier transform 
$$
\widehat{F}(y)= \sum_{x\,({\rm mod}\, p^t)} F(x) \ee\!\left(-\frac{xy}{p^t}\right).
$$
As in \cite[p. 67]{Iwaniec1987}, we obtain $\widehat{F}(y)= g(1,p^t) G_t(4mnp^b-y^2)$ where
$$
G_t(4mnp^b-y^2)=\sum_{d\,({\rm mod}\, p^{a+t})} \bigg(\frac{d}p\bigg)^{t+1} \ee\bigg(\frac{d(4mnp^b-y^2)}{p^t}\bigg).
$$

\underline{Case 1: $t$ is odd.} 
Then 
\begin{align*}
G_t(4mnp^b-y^2) 
& = \sumstar_{d\,({\rm mod}\, p^{a+t})}  \ee\bigg(\frac{d(4mnp^b-y^2)}{p^t}\bigg)
\\
& = \sum_{r=0,1} (-1)^r p^a \sum_{d\,({\rm mod}\, p^{t-r})} \ee\bigg(\frac{d(4mnp^b-y^2)}{p^{t-r}}\bigg).
\end{align*}
Since 
$$
\sum_{d\,({\rm mod}\, p^{t-r})} \ee\bigg(\frac{d(4mnp^b-y^2)}{p^{t-r}}\bigg)=p^{t-r} \delta_{y^2\equiv 4mnp^b\, ({\rm mod} \, p^{t-r})},
$$
we conclude
$$
\widehat{F}(y)= g(1,p^t) 
\sum_{r=0,1} (-1)^r p^{a+t-r} \delta_{y^2\equiv 4mnp^b \, ({\rm mod} \, p^{t-r})}
$$
and
\begin{align*}
F(x) 
& = p^{-t} \sum_{y\,({\rm mod}\,p^t)} \widehat{F}(y) \ee\bigg(\frac{xy}{p^t}\bigg)
\\
& = g(1,p^t) \sum_{r=0,1} (-1)^r p^{a-r}
\sum_{\substack{y\,({\rm mod}\,p^t)\\ y^2\equiv 4mnp^b ({\rm mod} \, p^{t-r})}} \ee\bigg(\frac{xy}{p^t}\bigg).
\end{align*}
As $|g(1,p^t)|\le p^{t/2}$ by \cite[(4.43)]{Iwaniec1997}, we see that $|F(1)|\le 2p^{a+t/2}$. 

\vskip2mm

\underline{Case 2: $t$ is even.}
Then 
\begin{align*}
G_t(4mnp^b-y^2) 
& = \sum_{d\,({\rm mod}\, p^{a+t})} \bigg(\frac{d}p\bigg) \ee\bigg(\frac{d(4mnp^b-y^2)}{p^t}\bigg)
\\
& = \sum_{u\,({\rm mod}\, p^{a+t-1})} \ee\bigg(\frac{u(4mnp^b-y^2)}{p^{t-1}}\bigg) 
\sum_{v\,({\rm mod}\,p)} \bigg(\frac{v}p\bigg) \ee\bigg(\frac{v(4mnp^b-y^2)}{p^{t-1}}\bigg).
\end{align*}
The first sum does not vanish only when $y^2\equiv 4mn$ $({\rm mod} \, p^{t-1})$, but in this case, the second sum equals zero. i.e. $G_t(4mnp^b-y^2)= 0$. So $\widehat{F}(y)= g(1,p^t) G_t(4mnp^b-y^2)=0$, implying $F(x)=0$.

(e)
Refer to \cite{DeDeo}, cf. \cite[Section 14]{CZ}.
\end{proof}

{\bf Acknowledgments}.
Lau is supported by GRF 17302514 of  the Research Grants Council of Hong Kong.
L\"u is supported in part by the key project of the National Natural Science Foundation of China (11531008) and IRT1264.
The preliminary form of this paper was finished during the visit of E. Royer and J. Wu at The University of Hong Kong in 2015. 
They would like to thank the department of mathematics for hospitality and excellent working conditions.

\vskip 10mm

\end{document}